\numberwithin{equation}{section}
\newtheorem{theorem}{Theorem}
\def\be{\begin{equation}}
\def\ee{\end{equation}}
\def\ve{\varepsilon}
\def\vp{\varphi}
\def\arrowk{^\to{\kern -6pt\topsmash k}}
\def\arrowK{^{^\to}{\kern -9pt\topsmash K}}
\def\arrowt{^\to{\kern -6pt\topsmash t}}
\def\arrowr{^\to{\kern-6pt\topsmash r}}
\def\arrowvp{^\to{\kern -8pt\topsmash\vp}}
\def\tk{\tilde{\kern 1 pt\topsmash k}}
\def\barm{\bar{\kern-.2pt\bar m}}
\def\barN{\bar{\kern-1pt\bar N}}
\def\barA{\, \bar{\kern-3pt \bar A}}
\def\be{\begin{equation}}
\def\ee{\end{equation}}
\def\ve{\varepsilon}
\def\iint{\not\kern -4pt\int}
\def\iiint{{\small{_\not}}\kern-3.5pt\int}
\def\mod{\text{\rm mod }}
\def\be{\begin{equation}}
\def\ee{\end{equation}}
\numberwithin{equation}{section}
\begin{document}
\title{Short character sums for composite moduli \footnote{2000 {\it Mathematics Subject
Classification}. 11L40, 11M06.}
\footnote{{\it Key words}. character sums, zero free regions}}
\author{Mei-Chu Chang\footnote{Research partially
financed by the NSF Grant~DMS~1301608.}\\ \texttt{Department of Mathematics}\\
\texttt{University of California, Riverside}\\\texttt{\small
mcc@math.ucr.edu}}

\date{}
\maketitle

\centerline{\bf Abstract}

\noindent We establish new estimates on short character sums for arbitrary composite moduli with small prime factors. Our main result improves on the Graham-Ringrose bound for square-free moduli and also on the result due to Gallagher and Iwaniec when the core $q'=\prod_{p|q}p$ of the modulus $q$ satisfies $\log q'\sim \log q$. Some applications to zero free regions of Dirichlet L-functions and the $\rm{P\acute{o}lya}$ and Vinogradov inequalities are indicated.

\bigskip

\noindent  {\bf Introduction.}

\noindent In this paper we will discuss short character sums for moduli with
small prime factors. In particular, we will revisit the arguments of
Graham-Ringrose [GR]  and Postnikov [P]. Our main result is an estimate valid
for general moduli, which improves on the known estimates in certain
situations.

It is well known that non-trivial estimates on short
character sums are important to many number theoretical issues. In particular, they are relevant in establishing density theorems for
the corresponding Dirichlet L-functions.

In the literature, several bounds on short incomplete character sums to some modulus $q$ may be found, depending on the nature of $q$. Burgess' bound applies for moduli $q$ that are cube free, provided the summation interval $I$ has size $N\gg q^{\frac 14 +\epsilon}$. Assuming $q$ has small  prime factors, nontrivial estimates may be obtained under weaker assumptions on $N$. There are two classical results in this aspect, based on quite different arguments. Citing from [IK], the Graham-Rignrose theorem (see [IK], Corollary 12.15) makes the assumptions that $q$ is square-free and
\be\label{2.8.1}N\geq q^{\frac 4{\sqrt{\log\log q}}}+\mathcal P^9\ee
with $\mathcal P$ the largest prime factor of $q$. On the other hand, Iwaniec's generalization of Postnikov's theorem (see [IK], Theorem 12.16) comes with a condition of the form
\be\label{2.8.2}N>(q')^{100}+e^{(\log q)^{3/4}\log\log q}\ee
with $q'=\prod_{p|q}p$ the core of $q$.

The main purpose of this work is to formulate a condition on $N$ as weak as possible, for general modulus $q$ with small prime factors, providing at least subpower savings. That this is possible (assuming $\log N>\phi(\log q)$ for some function $\phi$ satisfying $\frac {\phi(x)}x\to 0$ as $x\to \infty$) was probably known to experts, though no result of this kind seems to appear in the literature.

More specifically, we prove
the following.

\medskip

\noindent{\bf Theorem 5.} {\it Assume $N$ satisfies
$$
q>N>\max_{p|q} p^{10^3}
$$
and
\be\label{2.8.11}
\log N>(\log q)^{1-c} +C \log \Big(2\frac {\log q}{\log q'}\Big) \;\frac {\log q'}{\log\log q}\;,\ee
where $C, c>0$ are some constants, $($e.g. we may take $c= 10^{-3}$ and $C\sim 10^3)$ and $q'=\prod_{_{p|q}}p$.

Let $\chi$ be a primitive multiplicative character modulo $q$ and $I$ an interval of size $N$.
Then
\be\label{2.8.12}\Big|\sum_{x\in I} \chi(x)\Big| \ll N
e^{-\sqrt{\log N}}.\ee
}

\medskip
 \noindent{\it Remark.} By adjustment of the constants $c$ and $C$ in the statement, the bound \eqref{2.8.12} can be improved to $\;N\,e^{ - (\log N)^{1-\epsilon}}$ for any fixed $\epsilon > 0$.

 \medskip
Note that assumption \eqref{2.8.11} of Theorem 5 is implied by the stronger and friendlier
assumption
\be\label{2.8.13}\log N > C\Big(\log \mathcal P + \frac{\log q}{\log\log q}\Big),
\ee
where $\mathcal P=\max_{p|q} p$.
Assumption \eqref{2.8.13} is weaker than Graham-Ringrose's condition
 $$\log N>C\Big(  \log \mathcal P\;+\;\frac {\log q}{\sqrt{\log \log
q}}\Big)\;,$$which moreover assumes $q$ square-free.

\medskip

Many techniques used in the paper are just elaborations of known arguments. Two distinct methods are involved in order to treat small and big prime factors. Small prime powers are dealt with using the standard Postnikov argument combined with Vinogradov's estimate while for big primes we also rely
on Weyl's bound as used in Graham-Ringrose's argument. In addition to the basic techniques introduced in the work of Graham-Ringrose and Postnikov, we introduce one further ingredient which is a (new) mixed character sum estimate (see Theorem \ref{Theorem1}). It allows to merge  more efficiently Postnikov's procedure of replacing multiplicative characters to a powerful modulus by additive characters with a polynomial argument and the Weyl  differencing scheme which is the basis of the Graham-Ringrose analysis. Note that the replacement of $(\log\log q)^{\frac 12}$ in \eqref{2.8.1} by $\log\log q$ is achieved by a more economical variant of the Graham-Ringrose argument (based on a notion of `admissible pair' $(f, q)$ with $f\in\mathbb Z[x]$, $q\in \mathbb Z$). But this is a technical point with no essentially new ideas.

The condition \eqref{2.8.11} in Theorem 5 is the best we could do. But we did not try to optimize the power $1-c$ in the first term nor the saving in \eqref{2.8.12}. The main interest of \eqref{2.8.11} compared with \eqref{2.8.2} is that the assumption
$$\log N>C\log q'$$
is weakened to
$$\log N >C \frac {\log(2\frac {\log q}{\log q'})}{\log\log q}\; \log q'$$
leading to an improvement when $q'$ is relatively large.

This is the place where the mixed character sum (Theorem \ref{Theorem1}) comes into play.

\medskip
Next, we turn to some consequences of Theorem 5 that are elaborated in the last section of the paper.

Following well-known arguments (cf. [\;I\;]), Theorem 5 implies the following zero-free regions for the corresponding Dirichlet L-functions.

\medskip

\noindent{\bf Theorem 10.} {\it Let $\chi$ be a primitive multiplicative character with modulus $q$,
$\mathcal P=\max _{p|q}p,$ $ q'=\prod_{p|q}p,$ and $ K=\frac{\log q}{\log q'}.$ For $T>0$, let $$\theta=c\min\Big(\frac 1{\log
\mathcal P}, \frac {\log\log q'}{(\log q')\log 2K},\frac 1{(\log
qT)^{1-c'}}\Big).$$ Then the Dirichlet L-function
$L(s,\chi)=\sum_n\chi(n) n^{-s}, s=\rho+it$ has no zeros in the region
$\rho>1-\theta, \; |t|<T$, except for possible Siegel zeros.}

\medskip

In the theorem above, one may take $c'=1/10$.
In certain ranges of $q'$, Theorem 10 improves upon Iwaniec's condition [\,I\,]
$$\theta=\min\bigg\{c\frac 1{(\log q T)^{\frac 23}(\log\log q
T)^{\frac 13}}\;, \frac  1 {\log q'}\bigg\}.$$

\medskip
Using the zero-free region above and the result from [HB2] on the effect of a possible Siegel zero, we obtain the following.
\medskip

\noindent
{\bf Corollary 11.}
{\sl Assume $q$ satisfies that $\log p=o(\log q)$ for any $p|q$. If $(a, q)=1$, then there is a prime $P\equiv a(\mod q)$ such that
$P< q^{\frac {12} 5+o(1)}$.}

\medskip

Using Theorem 5, we may also obtain a slight improvement of the following result by Goldmakher ([G], corollary to Theorem 1) on the $\rm{P\acute{o}lya}$-Vinogradov inequality.

\medskip

\noindent(Goldmakher)  {\it Given $\chi(\mod q)$ primitive, with $q$ square-free. Then
\be\label{2.11.1}\Big|\sum_{n<x}\chi(n)\Big|\ll\sqrt q \;\log q\;
\sqrt{\log\log\log q}\left(\frac 1{\log\log q}+\frac {\log\mathcal P}{\log q}\right)^{\frac 14}.\ee}

\medskip

What we obtain is the following.

\medskip

\noindent{\bf Theorem 12.} {\it Let $\chi$ be a primitive multiplicative character with modulus $q$, and let
$\mathcal P$ be the largest prime divisor of $q$, $  q'=\prod_{p|q}p$ and
$K=\frac{\log q}{\log q'}$.  Let $M=(\log q)^{1-c}+\frac {\log q'}{\log\log q'}\log 2K+\log \mathcal P. $ Then
$$\Big|\sum_{n<x}\chi(n)\Big|\ll\sqrt q \sqrt{\log q}\;\sqrt M\sqrt{\log\log\log q}.$$}

In particular, Theorem 12 gives the bound\be\label{2.11.2}\sqrt
q\;\log q\;\sqrt{\log\log\log q}\;\bigg( \frac 1{\sqrt{\log\log q}}+\frac{\sqrt{\log \mathcal P}}{\sqrt{\log q}}\bigg)\;\;
\text{ for arbitrary } q. \ee

\smallskip

Clearly, \eqref{2.11.2} is a stronger bound than \eqref{2.11.1}.

\medskip

The paper is organized as follows. In Section 1, we state the mixed character sum theorem with square-free modulus and indicate where the changes are in the proof for prime modulus. In Section 2, we give a version of Postnikov's Theorem, using it to derive a non-trivial character sum bound for the modulus $q_0^mq_1$,
$q
_1$ square-free. Section 3 contains the notion of admissible pair and an improved version of Graham-Ringrose Theorem. Section 4 is the Graham-Ringrose version of mixed character sum estimate.
Section 5 contains our main theorem, discussion of our assumption and comparison of it with the assumptions in known results. The proof of the main theorem is in Section 6 and Section 7, its applications in Section 8.

Our main interest is the general form of the bounds as a function of the modulus. Constants may often be improved and we did not put emphasis on those.
\bigskip

\noindent{\bf Notations and Conventions.}

1. $e(\theta)=e^{2\pi i\theta}$, $e_p(\theta)=e(\frac {\theta}p).$

2. $\omega(q)=$ the number of prime divisors of $q$.

3. $\tau(q)=$ the number of divisors of $q$.

4. $q'=\prod_{_{p|q}}p$, the core of $q$.

5. $\mathcal P=\mathcal P(q)=\max_{_{p|q}}p$.

6. When there is no ambiguity, $p^{\;\ve} =[p^{\;\ve}]\in \mathbb
Z$.

7. Modulus $q$ is always sufficiently large.

8. $\epsilon, c, C=$ various constants, and $\epsilon$ is particularly small.

9. All characters are non-principal.

10. For polynomials $f(x)$ and $g(x)$ with no common factors, the {\it degree} of $ {f(x)\over g(x)}$ is $\deg f(x)+\deg g(x)$.

11. $A\ll B$ and $A=O(B)$ are each equivalent to that $|A|\leq cB$ for some constant $c$. If the constant $c$ depends on a parameter $\rho$, we use $\ll_\rho$. Otherwise, $c$ is absolute.
\bigskip
\section{ Mixed character sums.}
\begin{theorem}\label{Theorem1}
Let $P(x)\in\mathbb R[x]$ be an arbitrary polynomial of degree
$d\geq 1$, $p$ a sufficiently large prime, $I\subset [1, p]$ an
interval of size \be\label{1.1} |I| > p^{\frac 14 +\kappa} \ee $($for
some $\kappa>0)$ and $\chi$ a  multiplicative character
$(\mod p)$. Then \be\label{1.2} \Big|\sum_{n\in I} \chi(n) e^{i
P(n)}\Big|< c(\kappa)\,d^2\,|I|\; p^{-\;\frac{\kappa^2 }{10(d^2+2d+3)}}. \ee
\end{theorem}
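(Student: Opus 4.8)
\medskip

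\noindent\textbf{Proof proposal.} The plan is to run Burgess's amplification argument --- which is exactly what makes the threshold $|I|>p^{\frac14+\kappa}$ natural --- with the Weil bound for complete multiplicative character sums as arithmetic input, after first reducing the polynomial phase $e^{iP(n)}$ to a \emph{linear} phase so that it survives the Burgess shift.

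\noindent\textbf{Step 1 (linearizing the phase).} Set $S=\sum_{n\in I}\chi(n)e^{iP(n)}$. Applying the van der Corput--Weyl differencing inequality $d-1$ times lowers the degree of the phase from $d$ to $1$: for a parameter $H$ to be chosen later as a small power of $p$ one obtains
\be
|S|^{2^{d-1}}\ \le\ |I|^{2^{d-1}-1}\,\frac{C^{d}}{H^{d-1}}\sum_{\vec h\in[0,H)^{d-1}}\Big|\,\sum_{n\in I_{\vec h}}\chi(n)\,\overline{\chi(n+\ell_1)}\,\chi(n+\ell_2)\cdots\,e^{i(\lambda_{\vec h}n+\mu_{\vec h})}\Big|,
\ee
where $\vec h=(h_1,\dots,h_{d-1})$, the $O(2^{d-1})$ shifts $\ell_j$ are small $\mathbb Z$-linear combinations of the $h_i$, the new phase is linear with $\lambda_{\vec h}$ built from the leading coefficient of $P$, and $I_{\vec h}\subset I$. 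The $\vec h$ for which two shifts coincide make the character product degenerate; since $H<p$ these are genuine integer coincidences, $O(d^2 H^{d-2})$ in number, and they contribute negligibly to $|S|^{2^{d-1}}$ once $H$ is a small power of $p$.

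\noindent\textbf{Step 2 (Burgess on the linearized sum).} For a non-degenerate $\vec h$ the character factor is $\chi(g_{\vec h}(n))$ with $g_{\vec h}$ a rational function over $\mathbb F_p$ of degree $k\ll 2^{d-1}$ with distinct zeros and poles, so one must bound $\sum_{n\in I}\chi(g_{\vec h}(n))e^{i\lambda n}$. Translate $I$ by $ab$, with $a$ running over the $\sim p^{\nu}$ primes in $[1,p^{\nu}]$ and $b$ over $[1,p^{\beta}]$, choosing $\nu+\beta$ comfortably below $\frac14+\kappa$ so the translates agree with $I$ outside $O(p^{\nu+\beta})$ points. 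Averaging over $(a,b)$ and writing $g_{\vec h}(n+ab)=a^{k}\prod_j(a^{-1}(n-r_j)+b)^{\pm1}$ and $e^{i\lambda(n+ab)}=e^{i\lambda ab}\,e^{i\lambda n}$, the unimodular factors $\chi(a^{k})$ and $e^{i\lambda ab}$ pull out of the $n$-sum. Hölder in $(a,b)$ with exponent $2r$, expansion of the $2r$-th power, and enlargement of the $b$-range to a complete residue system (permissible since the summand is now $\ge0$) reduce matters to the complete sums $\sum_{b\bmod p}\chi\big(\prod_{i,j}(a^{-1}(n_i-r_j)+b)\big)\,\overline{\chi\big(\prod_{i,j}(a^{-1}(n_i'-r_j)+b)\big)}$ over $r$-tuples $\vec n,\vec n'$ of interior points of $I$. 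By the Weil bound each is $O(rk\sqrt p)$ unless the two multisets of roots agree --- equivalently $\vec n'$ is a permutation of $\vec n$ --- in which case the sum is $\le p$ but the accompanying linear phase $e^{i\lambda\sum(n_i-n_i')}$ is trivial, and there are only $O(r!\,|I|^{r})$ such tuples (the ``almost-diagonal'' tuples, for which the rational argument of $\chi$ is a perfect $\mathrm{ord}(\chi)$-th power without being constant, are bounded the same way). Collecting the diagonal contribution $\ll r!\,|I|^{r}p$ and the Weil contribution $\ll|I|^{2r}rk\sqrt p$, undoing Hölder and the translation, and optimizing $\nu,\beta,r$ in terms of $\kappa$ and $k$ gives $\big|\sum_{n\in I}\chi(g_{\vec h}(n))e^{i\lambda n}\big|\ll|I|\,p^{-c'\kappa^{2}/k}$; inserting this into Step 1 and taking a $2^{d-1}$-th root produces $|S|\ll|I|\,p^{-c\kappa^{2}4^{-d}}$, already a nontrivial estimate of the right type (with a worse dependence on $d$).

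\noindent\textbf{Main obstacle.} Upgrading the $d$-dependence from $4^{-d}$ to the stated $d^{-2}$ is the delicate point. The loss of a factor $2$ in the exponential savings at each of the $d-1$ differencing steps is what is wasteful; to avoid it one must treat the phase more efficiently, in the spirit of Vinogradov's method --- handling several powers at once, so that each stage costs only a factor $1-O(1/d)$ --- while keeping the multiplicative part under Burgess--Weil control. Making the two mechanisms coexist is the real work: in particular one must reconcile the ``$b$ modulo $p$'' requirement of the character sum with the ``$b$ over $\mathbb Z$'' requirement of any exponential-sum estimate in the auxiliary variable, and one must count the degenerate and almost-diagonal tuples in a way that is uniform in $d$, so that the degree enters the final exponent only polynomially.
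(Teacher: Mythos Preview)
The paper does not itself prove Theorem~1; it is quoted from~[C1]. But the method of~[C1] is on display in Section~4 of this paper (in the proof of Theorem~\ref{Theorem3}), and comparing it with your proposal isolates exactly the missing idea.

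Your plan --- Weyl-difference the phase down to degree~$1$, then run Burgess --- is sound and does give a nontrivial bound, but as you yourself note it yields only $p^{-c\kappa^{2}4^{-d}}$. The exponential loss in $d$ is not a technicality to be patched afterwards; it is the intrinsic cost of $d-1$ rounds of differencing, and the method of~[C1] avoids it by \emph{never differencing the phase at all}.

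Instead one runs the Burgess shift directly on $\sum_{n\in I}\chi(n)e^{iP(n)}$. After translating $n\mapsto n+y$ (with the shift variable $y$ ranging over an interval of length $M$) and averaging, the obstruction is that $e^{iP(n+y)}$ couples $n$ and $y$. Expand $P(n+y)=P_{0}(n)+P_{1}(n)y+\cdots+P_{d}(n)y^{d}$ and partition the $n$-range into cells $\Omega_{\alpha}$ on which the vector $(P_{0}(n),\dots,P_{d}(n))\bmod 1$ lies in a fixed box of side $\theta/M^{d}$ in $\mathbb T^{d+1}$. On each cell the phase is, up to $O(\theta)$, a \emph{fixed} unimodular function $C_{\alpha}(y)$ of $y$ alone, so it decouples from the character. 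The number of cells is $\sim(M^{d}/\theta)^{d+1}$; after H\"older with exponent $2k$ this becomes $(M^{d}/\theta)^{(d+1)/2k}$, while the remaining complete sums $\sum_{x\bmod p}\chi(R_{y_{1},\dots,y_{2k}}(x))$ are handled by Weil exactly as in your Step~2. Taking $\theta=M^{-1}$ and $k\asymp d^{2}$ absorbs the cell count and delivers the exponent $\kappa^{2}d^{-2}$ directly. This is precisely the computation surrounding the sets $\Omega_{\alpha}$ and the bound \eqref{3.8} in Section~4, specialized to prime modulus.

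So the gap in your proposal is a genuine missing idea: replace ``reduce the degree of the phase by differencing'' with ``freeze the phase on small cells so that it decouples from the shift variable.'' Once you have that device, the polynomial dependence on $d$ falls straight out of the cell count, and there is no need for the Vinogradov-type hybrid you gesture at in your last paragraph.
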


In the proof of Theorem 1, the assumption that $p$ is a prime is only
used in order to apply Weil's bound on complete exponential sums.
(For the Weil's estimate below, see Theorem 11.23 in [IK])

\medskip

 \noindent{\bf Weil's Theorem. } {\it Let $p$ be a prime, $f\in \mathbb Z[x]$ a polynomial of degree $d$, and $\chi$ a  multiplicative
character $(\mod p)$ of order $r>1$. Suppose
$f(\mod p) $ is not an $r$-th power. Then we have
$$\Big|\sum_{x=1}^p\chi(f(x))\Big|\leq d\; \sqrt p.$$}

The assumption of $f(\mod p)$ in Weil's Theorem holds if $f (\mod p) $ has a simple root or a simple pole. For $q$ square-free, one can derive the following theorem.

\medskip

 \noindent{\bf Weil's Theorem'.} {\it  Let $q=p_1\cdots p_k$ be square-free, $f\in \mathbb Z[x]$ a polynomial of degree $d$, and $\chi$ a multiplicative
character $(\mod q)$. Let $q_1 | q$ be such that for any prime $p | q_1$, $f (\mod p) $ has a simple root or a simple pole.
Then
$$\Big|\sum_{x=1}^q\chi(f(x))\Big|\leq d^{\;\omega(q_1)} { q \over \sqrt{q_1} }.$$}

\noindent {\it Proof.} Let  $\chi=\prod_i\chi_i$, where $\chi_i$ is a multiplicative character $(\mod p_i)$. Then $$\Big|\sum_{x=1}^q\chi(f(x))\Big|\leq \prod_{i=1}^k \Big| \sum_{x=1}^{p_i} \chi_i( f(x) ) \Big|\leq \prod_{p_i\mid q_1}d\sqrt{p_i}\prod_{p_i\nmid q_1}p_i=d^{\;\omega(q_1)}\frac q {\sqrt{q_1} }.$$

Therefore, we have the following.

\noindent{\bf  Theorem 1'.} {\it Let $P(x)\in\mathbb R[x]$ be an
arbitrary polynomial of degree $d\geq 1$, $q\in \mathbb Z$ square-free and sufficiently large, $I\subset [1, q]$ an interval of size
\be\label{1.1} |I| > q^{\frac 14 +\kappa} \ee (for some $\kappa>0$)
and $\chi$ a
multiplicative character $(\mod q)$. Then
\be\label{1.2} \Big|\sum_{n\in I} \chi(n) e^{i P(n)}\Big|< c(\kappa)\,d^2\,|I|\;
q^{-c\kappa^2 d^{-2}}\tau(q)^{4(\log d)d^{-2}}. \ee Here $c$ is an absolute constant.}

\noindent{\it Remark 1.1.} In the proof of Theorem 1 in [C1], the assumption that $p$ is a prime is only used to derive display (14) from display (13) by applying Weil's Theorem. For $q$ square-free, the same argument works if Weil's Theorem is replaced by Weil's Theorem'.

\smallskip

\noindent{\it Remark 1.2.} In Theorem 1' if $d<(\log q)^{1/3}$, then the factor $d^2$ can be dropped.
\bigskip

\section{ Postnikov's Theorem.}
An immediate application is obtained by combining Theorem
\ref{Theorem1}' with Postnikov's method (See [P], [Ga], [\,I\,], and
[IK] \S12.6).

\medskip

\noindent{\bf Postnikov's Theorem.} {\it Let $\chi$ be a primitive
multiplicative character $(\mod q)$, $q=q_0^m$. Then
$$\chi(1+q_0u)= e_{q}
\big(F(q_0u)).$$ Here $F(x) \in\mathbb Q[x]$  is a polynomial of the
form  \be\label{2.4} F(x) =BD \Big(x-\frac{x^2}2+\cdots\pm
\frac{x^{m'}}{m'}\Big) \ee with
$$D=\mathop{\prod_{k\leq m'}}_{(k, q_0)=1} k, \quad m'= 2m$$ and
$B\in\mathbb Z, ( B, q_0)=1$.}
(Note that $F(q_0x) \in\mathbb Z[x]$.)

\medskip

\noindent{\it Remark.} In [IK] the above theorem was proved for
$\chi(1+q'u)= e_{q} (F(q'u))$, where $q'=\prod_{p|q}p$ is the core
of $q$. That argument works verbatim for our case.

\begin{theorem}\label{Theorem2}
Let $q=q^m_0 q_1$ with $ (q_0, q_1)=1$ and $q_1$ square-free.

Assume $I\subset [1, q]$ an interval of size \be\label{2.1} |I|>q_0
q_1^{\frac 14+\kappa}. \ee Let $\chi$ be a multiplicative character
$(\mod q)$ of the form
$$
\chi=\chi_0\chi_1
$$
with $\chi_0(\mod q_0^m)$ arbitrary and $\chi_1(\mod q_1)$
primitive. Then \be\label{2.2} \Big|\sum_{n\in I} \chi(n)\Big|\ll
|I| q_1^{-c\kappa^2 m^{-2}} \tau(q_1) ^{c(\log m)m^{-2}}. \ee
\end{theorem}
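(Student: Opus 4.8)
The plan is to cut $I$ into arithmetic progressions to modulus $q_0$, use Postnikov's Theorem on each progression to replace the factor $\chi_0$ by the exponential of a polynomial, and then estimate the resulting mixed character sum modulo the square free part $q_1$ by Theorem 1$'$; the hypothesis \eqref{2.1} is tailored precisely so that each progression is long enough to invoke Theorem 1$'$ with modulus $q_1$.

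First I would discard the $n$ with $(n,q_0)>1$, as then $\chi(n)=0$. Fix $n_0\in\{1,\dots,q_0\}$ with $(n_0,q_0)=1$ and write $n=n_0+q_0t$, so that, as $n$ runs through $I$, the variable $t$ runs through an interval $T_{n_0}$ of length $\asymp|I|/q_0$, hence $>q_1^{1/4+\kappa}$ by \eqref{2.1}. Let $\bar n_0$ be an integer inverse of $n_0$ modulo $q_0^{m-1}$ and $\bar q_0$ an integer inverse of $q_0$ modulo $q_1$. Then $n_0+q_0t\equiv n_0(1+q_0\bar n_0t)\pmod{q_0^m}$ and $n_0+q_0t\equiv q_0(\bar q_0n_0+t)\pmod{q_1}$, so that
\[
\chi(n)=\chi_0(n_0)\,\chi_1(q_0)\,\chi_0\bigl(1+q_0(\bar n_0t)\bigr)\,\chi_1(\bar q_0n_0+t).
\]
Postnikov's Theorem --- applied, when $\chi_0$ is imprimitive, to the primitive character inducing it (modulo $q_0^{m_1}$ for some $1\le m_1\le m$) --- turns $\chi_0\bigl(1+q_0\bar n_0t\bigr)$ into $e_q\bigl(F(q_0\bar n_0t)\bigr)$ for a fixed $F\in\mathbb Z[X]$ of degree $2m_1\le 2m$.

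Consequently, after the shift $s=\bar q_0n_0+t$, the contribution of the progression $n\equiv n_0\pmod{q_0}$ to $\sum_{n\in I}\chi(n)$ has absolute value
\[
\Bigl|\sum_{s\in J_{n_0}}\chi_1(s)\,e^{\,iP_{n_0}(s)}\Bigr|,\qquad J_{n_0}=\bar q_0n_0+T_{n_0},\qquad P_{n_0}(s)=\tfrac{2\pi}{q}\,F\bigl(q_0\bar n_0(s-\bar q_0n_0)\bigr),
\]
a genuine mixed character sum with $P_{n_0}\in\mathbb R[X]$ of degree $d\le 2m$ (the coefficients of $P_{n_0}$ are in general irrational, which is permitted in Theorem 1$'$; the degenerate case where $\chi_0$ is principal gives $d=0$, a pure character sum covered by the Burgess bound behind Theorem 1$'$). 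Applying Theorem 1$'$ with modulus $q_1$, character $\chi_1$ (nontrivial, being primitive), degree $d$ and exponent $\kappa$, and then summing over the $\le\varphi(q_0)$ admissible $n_0$ while using $\sum_{n_0}|J_{n_0}|=\sum_{n_0}|T_{n_0}|\le|I|$, gives
\[
\Bigl|\sum_{n\in I}\chi(n)\Bigr|<|I|\;q_1^{-c\kappa^2 d^{-2}}\,\tau(q_1)^{4(\log d)d^{-2}}.
\]
(If some $|J_{n_0}|$ exceeds $q_1$, break it first into $O(|J_{n_0}|/q_1)$ blocks of length between $q_1$ and $2q_1$, which leaves the shape of the estimate unchanged.)

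It remains to absorb the dependence on $d$ into the stated bound \eqref{2.2}. Since $1\le d\le 2m$ one has $q_1^{-c\kappa^2 d^{-2}}\le q_1^{-c\kappa^2(2m)^{-2}}$, and when $d$ is of size $\asymp m$ the exponent $4(\log d)d^{-2}$ is $\le c(\log m)m^{-2}$; when $d$ is much smaller than $m$ the factor $q_1^{-c\kappa^2 d^{-2}}$ is far stronger than needed and comfortably absorbs the bounded power of $\tau(q_1)=q_1^{o(1)}$. Relabelling constants yields \eqref{2.2}. Given Theorem 1$'$ and Postnikov's Theorem there is no serious analytic obstacle --- the statement is an immediate application of these --- so the only genuine care needed is this bookkeeping, namely checking uniformly over the whole range $1\le d\le 2m$ of degrees that the conductor of $\chi_0$ can produce that $q_1^{-c\kappa^2 d^{-2}}\tau(q_1)^{4(\log d)d^{-2}}$ is dominated by $q_1^{-c\kappa^2 m^{-2}}\tau(q_1)^{c(\log m)m^{-2}}$, together with verifying that the decomposition modulo $q_0$ indeed leaves each piece long enough to meet the hypothesis of Theorem 1$'$ --- which is exactly what \eqref{2.1} is designed to arrange.
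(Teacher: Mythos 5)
Your proposal is correct and follows essentially the same route as the paper: fix a residue class $a\pmod{q_0}$, apply Postnikov's Theorem to $\chi_0(a+q_0x)$ to produce an exponential of a polynomial of degree $\le 2m$, rewrite $\chi_1(a+q_0x)$ as $\chi_1(q_0)\chi_1(a\bar q_0+x)$, and apply Theorem~1$'$ to the resulting mixed character sum over an interval of length $\gtrsim|I|/q_0>q_1^{1/4+\kappa}$. The additional bookkeeping you spell out --- imprimitive $\chi_0$, the degenerate $d=0$ case, splitting overlong intervals, and uniformly replacing $d$ by $2m$ in the exponents --- is suppressed in the paper's two-line proof but is consistent with it.
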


\begin{proof}
For $a\in [1,q_0], (a, q_0)=1$ fixed, using Postnikov's Theorem, we
write \be\label{2.3} \chi_0(a+q_0x)=\chi_0(a) \chi_0(1+q_0\bar a
x)=\chi_0(a) e_{q_0^m} \big(F(q_0\bar a x)\big), \ee where
$$
a\bar a =1 \quad(\mod q_0^m).
$$
Hence \be\label{2.5} \Big|\sum_{n\in I} \chi(n)\Big|\leq \sum_{(a,
q_0)=1} \Big|\sum_{a+q_0 x\in I} \ e_{q_0^m} \big(F(q_0\bar ax)\big)
\chi_1 (a+q_0x)\Big|. \ee Writing $\chi_1 (a+q_0 x)=\chi_1 (q_0)
\chi_1(a\bar q_0+x), q_0\bar q_0\equiv 1 (\mod q_1)$, the inner sum
in \eqref{2.5} is a sum over an interval $J=J_a$ of size
$\sim\frac{|I|}{q_0}$, and Theorem \ref{Theorem1}' applies.
\end{proof}

\bigskip

\section{ Graham-Ringrose Theorem.}
As a warm up, in this section we will reproduce Graham-Ringrose's
argument. With some careful counting of the {\it bad} set, we are
able to improve their condition on the size of the interval from $q^
{1/\sqrt{\log \log q}}$ to $q^{C/\log \log q}$.

\begin{theorem}\label{TheoremGR}
Let $q\in\mathbb Z$ be square-free, $\chi$ a primitive
multiplicative character $(\mod q)$, and $N<q$. Assume
\begin{enumerate}
\item For all $p | q$, $p<N^{\frac 1{10}}$.

\item $\log N>C\frac {\log q}{\log\log q}.$

\end{enumerate}
Then
\[
    \Big| \sum_{x=1}^N \chi(x) \Big|
    \ll N e^{-(\log N/5)^{3/4}}.
\]
\end{theorem}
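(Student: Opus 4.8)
The plan is to run the Graham--Ringrose amplification/van der Corput scheme, iterating the ``$A$-process'' (Weyl-van der Corput differencing) some $k$ times so as to convert the short character sum of length $N$ into a sum of complete (or nearly complete) multiplicative character sums of a product polynomial, to which the Weil bound in the form of \textbf{Weil's Theorem$'$} (or equivalently the estimate underlying Theorem $1'$) applies. Concretely, I would set $k \sim \log\log q$ and, at the $j$-th step, replace $S = \sum_{x=1}^N \chi(x)$ by a bound involving averages of $\sum_x \chi\!\big(\prod (x+h_i)/\prod(x+h_i')\big)$ over shift tuples $h$; after $k$ steps the relevant modulus for the complete sum is still $q$ (since $q$ is squarefree, no prime power ever enters), the polynomial has degree $\le 2^k$, and the ``length'' parameter has shrunk from $N$ to roughly $N/2^k$ but the gain per step is $\sim q^{-c/2^{?}}$ — the usual bookkeeping. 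The key point that makes the argument work is that $2^k$ is subpolynomial in $q$: with $k \asymp \log\log q$ one has $2^k = (\log q)^{O(1)}$, so the loss $d^{\,\omega(q_1)}$ and $\tau(q)^{\cdots}$ factors in Weil's Theorem$'$ are controlled, while hypothesis (1), $p < N^{1/10}$, guarantees the incomplete-to-complete-sum passage (cutting $[1,N]$ into progressions mod $q$, or mod the relevant divisor) loses only a harmless power.

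The steps, in order, would be: (i) fix $k$, perform the $k$-fold van der Corput differencing, carefully tracking the shift tuples and the resulting rational function $f_h(x) = \prod_{i}(x+h_i)\big/\prod_i (x + h_i')$ whose degree is $\le 2^k$; (ii) isolate the \emph{bad} set of shift tuples $h$ for which $f_h \pmod p$ fails to have a simple root or pole for ``enough'' primes $p\mid q$ — equivalently, for which the diagonal-type degeneracies occur — and show this bad set is small, of relative density at most $2^k\cdot(\text{something})/(\text{range of }h)$, which is where the improvement from $q^{1/\sqrt{\log\log q}}$ to $q^{C/\log\log q}$ comes from: a more careful count of coincidences among the $h_i$ modulo the small primes dividing $q$; (iii) for the \emph{good} tuples apply Weil's Theorem$'$ to get $\big|\sum_{x} \chi(f_h(x))\big| \le (2^k)^{\omega(q)} \sqrt q$ (after splitting the incomplete $x$-sum into complete sums over residues mod $q$, using hypothesis (1) to bound the number of pieces by $N/q$ up to lower-order terms); (iv) combine, optimize $k$, and verify that under hypothesis (2), $\log N > C\log q/\log\log q$, the resulting bound is $\ll N\exp(-c\sqrt{\log N})$, or better — one typically gets a saving like $\exp(-c\log N/k) = \exp(-c\log N/\log\log q)$, which dominates $\exp(-\sqrt{\log N})$ precisely in the stated range.

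The main obstacle I anticipate is step (ii): controlling the bad set of shift tuples sharply enough. The naive van der Corput iteration loses a factor roughly $q^{1/k}$ from the incompleteness at each of the $k$ stages, and to beat $q^{1/\sqrt{\log\log q}}$ one cannot afford the crude bound on how often $f_h$ degenerates modulo the primes $p\mid q$; one needs that for all but a $\le C^k/(\text{shift range})$ proportion of $h$, the rational function $f_h$ has, modulo ``most'' $p\mid q$ (enough to make $\prod_{p\,\text{good}} \sqrt p$ comparable to $\sqrt q$), a simple zero or pole. This is essentially a counting problem about when the multiset $\{h_i\}$ has repeated elements modulo $p$, summed over $p\mid q$, and getting the dependence on $q$ (rather than on $\omega(q)$ or on $\max p$) right is the crux. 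A secondary technical nuisance is keeping the $\tau(q)$-type and $d^{\omega(q)} = (2^k)^{\omega(q)}$ factors under control: since $\omega(q) \le C\log q/\log\log q$ and $2^k = (\log q)^{O(1)}$, one has $(2^k)^{\omega(q)} = \exp(O(\log\log q \cdot \log q/\log\log q)) = q^{o(1)}$, which is fine, but the exponents must be chosen compatibly with the $\exp(-\sqrt{\log N})$ target, and this is where hypotheses (1) and (2) get used to the hilt.
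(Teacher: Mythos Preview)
Your proposal has the right overall architecture (iterated Weyl--van der Corput differencing, degree doubling, Weil at the end) and you correctly identify the bad-set counting in step (ii) as the place where the improvement from $q^{1/\sqrt{\log\log q}}$ to $q^{C/\log\log q}$ is won. But there is a genuine gap in the mechanism, visible in your step (iii): you write that ``the relevant modulus for the complete sum is still $q$'' and that you will split the $x$-sum into $N/q$ complete residue systems mod $q$. Since $N<q$ by hypothesis, this is impossible --- there is no completion mod $q$ available, and the bound $(2^k)^{\omega(q)}\sqrt q$ you quote is worse than the trivial bound $N$ as soon as $N<\sqrt q$.

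The missing idea is the factorization $q=q_1\cdots q_r$ with each $q_i<N^{1/3}$ and the corresponding factorization $\chi=\chi_1\cdots\chi_r$ via CRT. At stage $s$ the paper shifts by multiples of $q_s$ (not by arbitrary $h$): the $q_s$-periodicity of $\chi_s$ makes $\chi_s(x+yq_s)=\chi_s(x)$, so after Cauchy--Schwarz the character $\chi_s$ drops out entirely and one is left with $\chi_{s+1}\cdots\chi_r$ applied to a rational function of doubled degree. After $r-1$ such steps only $\chi_r$ survives, with modulus $q_r<N^{1/3}$, and \emph{now} the $x$-sum of length $N$ can be broken into $\sim N/q_r$ complete sums mod $q_r$, to which Weil's Theorem$'$ applies. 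Hypothesis (2) forces $r\lesssim \log\log q$, which keeps the degree $2^{r}$ and the Weil loss $d^{\omega(q_r)}$ under control. Your bad-set analysis in (ii) then has to be carried out at each stage relative to the surviving factor $q_r$ (the paper's notion of an \emph{admissible} pair $(f,\bar q)$ with $\bar q\mid q_r$), not relative to the full $q$; this is what makes the counting in (3.7) and (3.12) of the paper go through.
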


\medskip

We will prove the following stronger and more technically stated theorem.

\medskip

\noindent{\bf Theorem 3'} {\it
 Assume $q = q_1 \dots q_r$  with $(q_i,q_{j})=1$ for $i\not = j$, and $q_r$ square-free.
Factor
$$
\chi=\chi_1\ldots \chi_r,$$
where $\chi_i (\mod q_i)$
is arbitrary for $i<r$, and primitive for $i=r$.
 We further assume

\rm{(i)}. {\it For all} $ p | q_r, p > \sqrt{\log q_r}.$

\rm{(ii)}. {\it For all }$i$, $q_i<N ^{\frac 13}$.

\rm{(iii)}. $r < c \log \log q$ for some $c<1/4-\epsilon$.

\smallskip

\noindent {\it Then}
$$
    \Big| \sum_{x=1}^N \chi(x) \Big|
    \ll N e^{-(\log q_r)^{1-c}/\log\log q_r}.
$$}

\smallskip

\noindent{\it Remark 3.1.} Instead of proving Theorem 3, we will prove Theorem 3'. To see that Theorem \ref{TheoremGR}'
implies Theorem \ref{TheoremGR}, we write

$$q=\bar p_1\cdots\bar p_{\ell} \cdot p_1\cdot p_2\cdots,$$
where $$\bar p_1, \dots, \bar p_{\ell}<\sqrt {\log q},\quad\text{and
}\;\; p_1>p_2>\cdots\geq\sqrt {\log q}.$$
Hence \be\label{GR1}\prod_{i=1}^\ell\bar p_i<e^{2\sqrt {\log
q}}<q^{\frac 1{10}}.\ee

Let $q_1=\prod_{i=1}^k p_i$, where we let $k$ be maximum as to ensure that  $q_1<N^{\frac 13}$.

Therefore, $p_{_{k+1}}q_1>N^{\frac 13}$. By (1), $q_1>N^{\frac 13 -\frac
1{10}}>N^{\frac 15}$.

We repeat this process on
$\frac q{q_1}$
to get $q_{2}$ such that $N^{\frac 13}> q_{2}>N^{\frac 15}$.
Then, we repeat it on $\frac q{q_1q_{2}}$ etc. After re-indexing,
we have
$$q_r> q_{r-1}>\dots> q_2 >N^{\frac 15}.$$
Hence $q>\big(N^{\frac 15}\big)^{r-1},$ which together with (2) gives
(iii). Theorem 3' now can be applied. $\quad\square$

\medskip

\noindent{\it Remark 3.2.} It follows from (ii) and the argument in Remark 3.1 that
in the proof of Theorem\ref{TheoremGR}, one may choose $q_r$ satisfying $N^{1/5}<q_r<N^{1/3}$. Hence $\log N\sim \log q_r$.

\medskip

The following definition will be used frequently throughout the rest of the paper.

\noindent{\bf Definition.} Let $p$ be a prime and $f \in \mathbb Z[x]$.  We say $p$ is {\it good} or $f$ is $p$-{\it good}, if $f$ $ \mod p$ has a simple root or a simple pole. Otherwise it is called {\it bad} or $p$-{\it bad}. For $\bar q | q_r$ satisfying $ \bar q> \sqrt {q_r}$, the pair $(f,\bar q)$ is called $q_r$-{\it admissible} (or {\it admissible} when there is no ambiguity) if $$  p > \sqrt{\log q_r}\;\;\text{ for all } \;\;p | \bar q,$$ and $$
\mathop{ \prod_{p|\bar q} }_{\text{$p$ is good }}p\;
>\; { \bar q \over q_r^\tau }\;, \quad\text{ where }
    \tau = { 10 \over \log \log q_r }\;.$$

\noindent{\it Remark 3.3.}
    Let $(f,\bar q)$ be admissible, and let  $\chi$ be primitive mod $\tilde{\tilde q}$, where $\tilde{\tilde q}$ is square-free and a multiple of  $\bar q$. Assume 
    \be\label{GR2}
        \log d < \frac 1 \tau = { \log \log q_r \over 10 }, \text{ where } d=\deg f.
    \ee
    Then we have a bound on the complete sum
    \[
        \left| \sum_{x=1}^{\tilde{\tilde q}} \chi(f(x)) \right|
        <\tilde{\tilde q} \;(\hat q) ^{\;-\frac{3}{10}}< \tilde{\tilde q} (\bar q)^{-\frac 3{10}} q_r^{\frac 3{10}\tau},
    \]where $\hat{q}$ is the product of the good primes $p|\bar q$.

    \begin{proof}[Proof of Remark 3.3.]
     For $p | \bar q | q_r$, our assumptions imply
\be\label{GR3} p^{1/5} > (\log q_r)^{1/10}>d.\ee To prove the remark, we factor $\chi=\chi_1\chi_2$, where $\chi_1$  (respectively, $\chi_2$) is a character $\mod \bar q$ (resp. $\mod  \frac{\tilde{\tilde q}}{\bar q}$).
Weil's estimate gives a bound on the complete sum of $\chi_1$ in the following estimate.
\be\label{GR4}\begin{aligned}
 &\bigg| \sum_{x=1}^{\tilde{\tilde q}} \chi(f(x)) \bigg|\\\leq\;&\bigg| \sum_{x=1}^{\bar q} \chi_1(f(x)) \bigg|\;\bigg| \sum_{x=1}^{\frac{\tilde{\tilde q}}{\bar q}} \chi_2(f(x)) \bigg|
    < { \bar q \over \sqrt{\hat{q}} }\; d^{\;\omega(\hat{q})}\;\frac{\tilde{\tilde q}}{\bar q}={ \tilde{\tilde q} \over \sqrt{\hat{q}} }\; d^{\;\omega(\hat{q})}.\end{aligned}
\ee
where $\hat{q}$ is the product of the good primes $p|\bar q$.  Using
\eqref{GR3}, we bound the character sum above by
\[
    \tilde{\tilde q} \prod_{p | \hat{q}} { d \over \sqrt p }
    < \tilde{\tilde q} \prod_{p | \hat{q}} p^{-3/10}
    = \tilde{\tilde q} \hat{q}^{\;-3/10}.
\]
Since $(f,\bar q)$ is admissible, we have
\[
 \left| \sum_{x=1}^{\tilde{\tilde q}} \chi(f(x))  \right|<  \tilde{\tilde q} \hat{q}^{\;-\frac 3{10}}
    <\tilde{\tilde q} (\bar q)^{-\frac 3{10}} q_r^{\frac 3{10}\tau}. \qedhere
\]
\end{proof}

\begin{proof}[Proof of Theorem 3'.]

We will use Weyl differencing.

Take $M=\big[ \sqrt N\;\big]$. Shifting the interval $[1,N]$ by $y q_1$ for any $1 \leq y \leq M $, we get \[
    \Big| \; \sum_{x=1}^N \chi(x) \;
    -  \sum_{x=1}^N \chi(x + y q_1)  \;\Big|
    \leq 2y q_1 \ll M q_1 .
\]
Averaging over the shifts gives
\be\label{GR5}  \frac 1 N \Big| \sum_{x=1}^N \chi(x) \Big|
    \leq { 1 \over NM } \sum_{x = 1}^N \Big| \sum_{y=1}^M \chi(x + y q_1) \Big|
        + O\left( { M q_1 \over N } \right).\ee
Let
$$\chi_1'=\chi_2\cdots\chi_r.$$

Using the $q_1$-periodicity of $\chi_1$ and Cauchy-Schwarz inequality on the double sum in \eqref{GR5}, we have
\be\label{GR6}
{ 1 \over NM } \sum_{x = 1}^N \Big| \sum_{y=1}^M \chi(x + y q_1) \Big|
    \leq \left[ { 1 \over NM^2 }
        \sum_{y,\;y'=1}^M \;\Big|\sum_{x=1}^N
            \chi_1'\left( {x + q_1 y \over x + q_1 y'} \right)\Big|\;
        \right]^{1/2}.
\ee

For given $(y, y')$, we consider $$f_{y, y'}(x)= {x + q_1 y \over x + q_1 y'}$$ and distinguish among the pairs
$(f_{y, y'}, q_r)$ by whether or not they are $q_r$-admissible. Note that if $(f_{y, y'}, q_r)$ is not admissible, then the product of {\it bad} prime factors of $q_r$ is at least $q_r^\tau$ and this product  must divide $y-y'$. We will estimate the size of the set of bad $(y, y')$ and use trivial bound for the inner sum in \eqref{GR6} corresponding to such bad $(y, y')$.
\be\label{GR7}\begin{aligned}
&\Big|\big\{ (y, y')\in [1, M]^2: (f_{y, y'}, q_r) \text{ is not admissible }\big\}\Big|\\
\leq &\mathop{ \sum_{Q|q_r} }_{Q\geq q_r^\tau}\Big|\big\{(y, y')\in [1, M]^2: Q|\;y-y'\;\big\}\Big|\\
\leq &\mathop{ \sum_{Q|q_r} }_{Q\geq q_r^\tau}{M^2\over Q}<2^{\omega(q_r)}{M^2\over q_r^\tau}
<M^2 q_r^{-{7\over 10}\tau}.
\end{aligned}\ee
(For the second inequality, we note that $M>Q$.)

Hence \eqref{GR6} is bounded by
\be\label{GR8} q_r^{-{7\over 20}\tau}+\Big|{1\over N} \sum_{x=1}^N\chi_1'(f_1(x))\Big|^{1\over 2},\ee
where $f_1$ is the $f_{y,y'}$ with the maximal character sum among all admissible pairs. i.e. \be\label{GR11}\big|\sum_{x=1}^N\chi_1'(f_1(x))\big|=\mathop{\max_{f_{y,y'}}}_{(f_{y, y'}, q_r)\text{  admissible}} \;\;\bigg|\sum_{x=1}^N\chi_1'(f_{y,y'}(x))\bigg|\; .\ee  Thus, there exists $\bar q_1|q_r$, $\bar q_1>q_r^{1-\tau}$ and for any
$p|\bar q_1$, $f_1$ is $p$-good.

To bound the second term in \eqref{GR8}, we will do induction on the number of  characters in the factorization of $\chi$ and first
prove the following.

\noindent
{\it Claim. For $s=1,\cdots, r-1$, denote $\chi_s'=\chi_{s+1}\cdots \chi_r$. Let $f_s(x)$ be of the form $f_s(x)=\prod_j(x-b_j)^{c_j}$, where $b_j, c_j\in \mathbb Z$ and $\deg f_s\leq 2^s$.\\
\noindent Denote  $\bar q_{0}=q_r$. Assume there is $\bar q_{s-1}|q_r$
such that $\bar q_{s-1}\ge q_r^{1-(s-1)\tau}$ and $(f_s, \bar q_{s-1})$ is admissible.  Then
\be\label{GR9} {1\over N} \;\Big|\sum_{x=1}^N\chi_s'(f_s(x))\Big|\leq q_r^{-{\;\tau\over 5}\cdot\frac 12}+\Big|{1\over N}\sum_{x=1}^N\chi_{s+1}'(f_{s+1}(x))\Big|^{\frac 12},\ee
where $f_{s+1}$ is of the same form as $f_s$ with $\deg f_{s+1}\leq 2^{s+1}$, and there is $\bar q_{s}|q_r$
such that $\bar q_{s} >q_r^{1-s\tau}$ and $(f_{s+1}, \bar q_{s})$ is admissible.
}


\noindent
{\it Proof of Claim.}

As before, the $q_{s+1}$-periodicity of $\chi_{s+1}$ and Cauchy-Schwarz inequality give a bound on the character sum in the left-hand-side of \eqref{GR9} by
\be\label{GR10}
  \left[ { 1 \over NM^2 }
        \sum_{y,\;y'=1}^M \;\Big|\sum_{x=1}^N
            \chi_{s+1}'\left( {f_s(x + q_{s+1} y) \over f_s(x + q_{s+1} y')} \right)\Big|\;
        \right]^{1/2}.
\ee
Set
$$f_{s+1}(x)={f_s(x + q_{s+1} y) \over f_s(x + q_{s+1} y')}\;,$$ where $(y, y')$ is chosen among all good pairs as in \eqref{GR11}, such that the inner character
sum in \eqref{GR10} is the maximum.

We want to bound the set of bad $(y, y')$. For $p|\bar q_{s}$,
$$f_s(x)=(x-a)^{\epsilon}\prod_j(x-b_j)^{c_j}\;\text{ for some $\epsilon\in \{-1, 1\}$, where } a\not= b_j \;\mod p.$$
Hence$$f_{s+1}(x)=\bigg({x+q_{s+1}y-a\over x+q_{s+1}y'-a}\bigg)^{\epsilon}\prod_j\bigg({x+q_{s+1}y-b_j\over x+q_{s+1}y'-b_j}\bigg)^{c_j}\;.$$
For $y\not= y' \;\mod p$, if $a-q_{s+1}y$ is not a simple root or pole, then
$$a-q_{s+1}y= b_j-q_{s+1}y'\quad \mod p$$ for some $j$. Therefore, by the same reasoning as for \eqref{GR7},
\be\label{GR12}\begin{aligned}&\;\Big|\big\{ (y, y')\in [1, M]^2: (f_{s+1}, \bar q_s) \text{ is not admissible }\big\}\Big|\\
\leq&\mathop{ \sum_{Q|\bar q_s} }_{Q> q_r^\tau}\Big|\big\{(y, y')\in [1, M]^2:\forall p|Q, f_{s+1} \text{ is }p\text{-bad }\big\}\Big|\\
\leq & \mathop{ \sum_{Q|\bar q_s} }_{Q> q_r^\tau}{M^2\over Q}\; \big(2^s\big)^{\omega(Q)}=M^2\mathop{ \sum_{Q|\bar q_s} }_{Q> q_r^\tau}
{\big(2^s\big)^{\omega(Q)}\over Q}\;.
\end{aligned}
\ee(In the above bound, the factor $2^s$ comes from the choices of $b_j$.)

By assumptions (i) and (iii),
\be\label{GR15}{\big(2^s\big)^{\omega(Q)}\over Q}\leq \prod _{p|Q} {2^r\over p}<\prod_{p|Q}{1\over \sqrt p}={1\over\sqrt Q}<q_r^{-{\tau\over 2}},\ee
and \eqref{GR12} is bounded by
$$M^2\;{2^{\omega(q_r)}\over q_r^{\tau/2}}<M^2 q_r^{-\tau/5},$$
and the claim is proved.$\;\;\;\square$

 At the last step of our induction, we are bounding
 \be\label{GR13}\Big|\sum_{x=1}^N\chi_r(f_{r-1}(x))\Big|,\ee
where  $f_{r-1}$ is of the form as in the claim with $\deg f_{r-1}\leq 2^{r-1}$ and there is $\bar q_{r-2}|q_r$ such that $\bar q_{r-2}>q_r^{1-(r-2)\tau}>\sqrt q_r$ and
 $\forall p|\bar q_ {r-2}$ is good. In particular, $(f_{r-1}, \bar q_{r-2})$ is admissible and Remark 3.3 applies. \big(Note that $ \bar q_{r-2}^{\;\;\;\;\;\;-\frac 3{10}} \;q_r^{\;\;\frac 3{10}\tau}<\bar q_{r-2}^{\;\;\;\;\;\;-(\frac 3{10}-\frac 35\tau)}<\bar q_{r-2}^{\;\;\;\;\;\;-\frac 14}$.\big) Hence, we have
 $$\Big|\sum_{x=1}^N\chi_r(f_{r-1}(x))\Big|<N\bar q_{r-2}^{\;\;\;\;\;\;-{1\over 4}}<Nq_r^{\;\;-{1\over 8}},$$
 and we reach the final bound
 \be\label{1.28.2013}\begin{aligned}&\frac 1 N \Big| \sum_{x=1}^N \chi(x) \Big|\\\leq \;&q_r^{-7\tau/ 20}+q_r^{-\tau/ 10}+\cdots+\Big( q_r^{-\tau/ 5}\Big)^{1/ 2^{r-1}}+\Big( q_r^{-1/ 8}\Big)^{1/ 2^{r-1}}+O\left(\frac{Mq_1}{N}\right)\\
 \ll \;&\Big( q_r^{-\tau/ 5}\Big)^{1/ 2^{r-1}}+\Big( q_r^{-1/ 8}\Big)^{1/ 2^{r-1}}\\
 \ll \;&\; q_r^{-{2\over \log\log q_r}\cdot  {1\over 2^{c\log\log q_r}}}\\
 =\;\;&e^{-{\log q_r\over \log\log q_r\;(\log q_r)^c}}\\
 <\;\;&e^{-(\log q_r)^{1-c}/\log\log q_r}.\end{aligned}\ee
\end{proof}
The proof of Theorem 3' also gives an argument for the following theorem.

\smallskip

\noindent{\it Remark 3.4.} In Theorem 3 the saving of $e^{(\log N/5)^{3/4}}$ is certainly better than the
$e^{\sqrt{\log N}}$ we use in the theorems thereafter (optimizing the exponent of $\log N$ is not our focus).

\medskip

\noindent{\bf Theorem 3''} {\it
 Assume $q = q_1 \dots q_r$  with $(q_i,q_{j})=1$ for $i\not = j$, and $q_r$ square-free.
Factor
$
\chi=\chi_1\ldots \chi_r,$
where $\chi_i (\mod q_i)$
is arbitrary for $i<r$, and primitive for $i=r$.\\
 We further assume

\rm{(i)}. {\it For all $ p | q_r, p > \sqrt{\log q_r}.$

\rm{(ii)}. For all $i$, $q_i<N^{1/3} $.

\rm{(iii)}. $r < c \log \log q$ for some $c<1/4-\epsilon$.\\
{\it Let $$f(x)=\prod_j(x-b_j)^{c_j}, \;\;\; c_1\in\{-1, 1\}, \;\; d=\deg
f=\sum |c_j|.$$ Suppose that
 $(f, q_r)$ is {\rm admissible} $($as defined after the statement of Theorem 3'$)$. Furthermore,
 assume

 \rm{(iv)}. $d=\deg f<\big(\log q_r\big)^{\frac 18}$.

\noindent {\it Then}
$$
    \Big| \sum_{x=1}^N \chi(f(x)) \Big|
    \ll N e^{-(\log q_r)^{1-c}/\log\log q_r}$$}

\smallskip

\noindent{\it Remark 3.5.} To prove Theorem 3'', one only needs to modify the proof of Theorem \ref{TheoremGR}' slightly by multiplying ${ M^2\over Q}$ by $d^{\omega(Q)}$
 in \eqref{GR7} and replacing $2^s$ (respectively, $2^r$) by
 $2^{s-1}d$ (resp. $2^{r-1}d$) in \eqref{GR12} (resp. \eqref{GR15}).

 \smallskip

\section{ Graham-Ringrose for mixed character sums.}
The technique used to prove Theorem \ref{Theorem1}' may be combined
with the method of Graham-Ringrose for Theorem \ref{TheoremGR}' to bound short mixed character sums
with highly composite modulus (see also \cite{IK} p. 330--334).

Let $q=q_1 \ldots q_r$ with $(q_i,q_{j})=1$ for $i\not = j$, and $q_r$ square-free, such that (i) and (iii) of Theorem \ref{TheoremGR}' hold .

Let
$$
\chi=\chi_1\ldots \chi_r,$$
where $\chi_i (\mod q_i)$
is arbitrary for $i<r$, and primitive for $i=r$.

Let $I\subset [1, q]$ be an interval of size $N<q$, and let $f(x) =\alpha_d x^d +\cdots+\alpha_0\in \mathbb R[x]$ be an arbitrary polynomial
of degree $d$.

Assuming (ii) of Theorem \ref{TheoremGR}' and an appropriate assumption on $d$, we establish a bound on \be\label{3.1} \sum_{x\in I} \chi(x)
e^{if(x)}. \ee

The case $f=0$ corresponds to Theorem \ref{TheoremGR}'. The main idea to bound \eqref{3.1} is as follows. First, we repeat part of the proof of Theorem
\ref{TheoremGR}' in order to remove the factor $e^{if(x)}$ at the cost of obtaining a character sum with polynomial argument. Next, we invoke Theorem \ref{TheoremGR}'' to estimate these sums.

Write $q=q_1Q_1$ with $Q_1=q_2 \ldots q_r,$ and denote $ \mathcal Y_1=
\chi_2\ldots \chi_r$.

Choose $M\in\mathbb Z$ such that \be\label{3.27} M\cdot \max q_i<N,\quad\text{and }\quad M<\sqrt N.\ee
Using shifted product method as in (3.5), we have
\begin{alignat}{2}\label{3.2}
\sum_{x\in I} \chi(x) e^{if(x)} =\frac 1M\sum_{\substack {x\in I\\ 0\leq y< M}}& \chi(x+q_1 y) e^{if(x+q_1y)}+O(q_1 M),\\
 \frac 1M\Big|\sum_{\substack {x\in I\\ 0\leq y< M}} \chi(x+q_1 y) e^{if(x+q_1y)}\Big|&= \frac 1M \Big|\sum_{\substack{x\in I\\ 0\leq y<M}}
 \chi_1(x) \mathcal Y_1 (x+q_1 y) e^{if(x+q_1 y)}\Big|\notag\\
&\label{3.3} \leq \frac 1M\sum_{x\in I} \Big|\sum_{0\leq y<M}
\mathcal Y_1 (x+q_1 y) e^{if (x+q_1 y)}\Big|.
\end{alignat}
Next, we write
$$
f(x+q_1 y)=f_0(x)+f_1(x)y+\cdots+ f_d(x)y^d.
$$
Subdivide the unit cube $\mathbb T^{d+1}$ into cells
$U_\alpha= B\big(\xi_\alpha, \frac 1{M^{d+1}}\big)\subset \mathbb
T^{d+1}$, $\xi_\alpha \in \mathbb T^{d+1}$.

Denote
$$
\Omega_\alpha =\{x\in I: \big(f_0(x), \ldots, f_d(x)\big)\in
U_\alpha \;\;\mod 1\}.
$$
Hence, for $x\in \Omega_\alpha$
\begin{alignat}{2}\label{3.4}
f(x+q_1y)&= \xi_{\alpha, 0} +\xi_{\alpha, 1} y+\cdots+ \xi_{\alpha, d} y^d+O\Big(\frac 1M\Big)\notag\\[6pt]
e^{if(x+q_1 y)}&=e^{i(\xi_{\alpha, 0}+\cdots+ \xi_{\alpha, d} y^d)}+O\Big(\frac 1M\Big).
\end{alignat}

The number of cells is

\be\label{3.5}
\sim\big(M^{d+1}\big)^{d+1}.
\ee
Substituting \eqref{3.4} in \eqref{3.3} gives

\be\label{3.6} \begin{aligned}&\frac 1M\sum_{x\in I} \Big|\sum_{0\leq y<M}
\mathcal Y_1 (x+q_1 y) e^{if (x+q_1 y)}\Big|\\ =&\frac 1M
\sum_\alpha\sum_{x\in\Omega_\alpha} \Big|\sum_{0\leq y\leq M}
C_\alpha (y) \mathcal Y_1 (x+q_1y)\Big|+ O\Big(\frac NM\Big),\end{aligned} \ee where
$|C_\alpha(y)| =1$.

Next, applying H\"older's inequality to the triple sum in \eqref{3.6} with $k\in\mathbb N$, we have
$$\begin{aligned}&
\sum_\alpha \sum_{x\in\Omega_\alpha} \Big|\sum_{0\leq y\leq M}
C_\alpha (y) \mathcal Y_1 (x+q_1y)\Big| \\\leq &N^{1-\frac 1{2k}}
\Big(\sum_\alpha \sum_{x\in I} \Big|\sum_{0\leq y\leq M} C_\alpha
(y) \mathcal Y_1 (x+q_1y)\Big|^{2k}\Big)^{\frac 1{2k}}.\end{aligned}
$$
Therefore, up to an error of $O(\frac NM)$, \eqref{3.6} is bounded by
$$\text{}\qquad N\; \bigg[\frac 1{NM^{2k}}\;\big(
M^{d+1}\big)^{d+1} \sum_{0\leq y_1, \ldots y_{2k} <
M}\Big|\sum_{x\in I} \mathcal Y_1 \Big(\frac {(x+q_1 y_1)\cdots
(x+q_1 y_k)} {(x+q_1 y_{k+1}) \cdots (x+q_1 y_{2k})}\Big)\Big|\bigg]
^{\frac 1{2k}}$$
\be\label{3.8} =N\; \big(M^{d+1}\big)^{\frac
{d+1}{2k}}\; \bigg[ \frac 1{NM^{2k}}
 \sum_{0\leq y_1, \ldots, y_{2k}<M} \Big|\sum_{x\in I} \mathcal Y_1
\big(R_{y_1, \ldots , y_{2k}}(x) \big)\Big|\bigg]^{\frac
1{2k}},\quad
\ee

where
$$
R_{y_1, \ldots, y_{2k}} (x) =\frac {(x+q_1y_1)\cdots (x+q_1 y_k)}{(x+ q_1 y_{k+1})\cdots (x+q_1 y_{2k})}.
$$

To bound the double sum in \eqref{3.8}, we apply Theorem \ref{TheoremGR}'' with
$f(x)=R_{y_1, \ldots, y_{2k}} (x)$ for those tuples $(y_1,\ldots, y_{2k})\in [0,M-1]^{2k}$ for
which $(R_{y_1, \ldots, y_{2k}}, q_r)$ is admissible. For the other tuples, we use the trivial bound.
If $(R_{y_1, \ldots, y_{2k}}, q_r)$ is not admissible, then there is a divisor $Q|q_r$, $Q> q_r^{\tau}$, such that
for each $p|Q$, the set $\{\pi_p(y_1), \cdots, \pi_p(y_{2k})\}$ has at most $k$ elements. Here $\pi_p$ is the
natural projection from $\mathbb Z$ to $\mathbb Z/p\mathbb Z$. We will estimate the contributions of $(y_1, . . . , y_{2k})$ for which $(R_{y_1, \ldots, y_{2k}}, q_r)$ is not admissible by distinguishing the tuples $(y_1,\ldots, y_{2k})$ according to the relative size of $Q$ and its prime factors.

\smallskip

\noindent (a). {\it Suppose that there is $p|Q$ with $p>\sqrt M$.}

Then the number of $p$-bad tuples $(y_1,\ldots, y_{2k})$ is bounded by
$$\begin{pmatrix}{2k}\\{k} \end{pmatrix} M^k k^k\Big(1+{M\over p}\Big)^k<(4k)^k M^{\frac 32 k}.$$Indeed, one chooses a set $I$ of $k$ indices and specify the corresponding $y_j$; for the other indices, $\pi_p(y_j)$ is taken within $\{\pi_p(y_j):j\in I\}$,
and summing over the prime divisors of $q_r$ gives
\be\label{3.55}\omega(q_r) (4k)^k M^{\frac 32 k}<M^{\frac 74k},\ee
provided
\be\label{3.50} 50\leq k<M^{\frac 15},\ee and
\be\label{3.51} \log q_r<M.\ee

\smallskip

\noindent (b). {\it Suppose $Q>M$ and $p\leq \sqrt M$ for each $p|Q$. }

Take $Q_1|Q\;$ such that $\sqrt M<Q_1\leq M$. The number of tuples $(y_1,\ldots, y_{2k})$ that are $p$-bad
for each $p|Q_1$ is at most
$$\begin{aligned}&\Big (\frac M{Q_1}\Big)^{2k}\prod_{p|Q_1} \begin{pmatrix}{2k}\\{k} \end{pmatrix}\; p^k\;k^k\\<\;&\Big (\frac M{Q_1}\Big)^{2k}\prod_{p|Q_1}
(4kp)^k< \;(ck)^{k\omega(Q_1)}{M^{2k}\over Q_1^{ k}}<\;{M^{2k}\over Q_1^{\frac k3}}<\;M^{\frac {11}6k},\end{aligned}$$
provided
\be\label{3.52} k<\min_{p|q_r} p^{\frac 13}.\ee

Summing over all $Q_1$ as above gives the contribution
\be\label{3.53} M^{\frac {11}6k+1}<M^{\frac {15}8 k}.\ee

\noindent (c). {\it Suppose $q_r^{\tau}<Q<M$.}

The number of tuples $(y_1,\ldots, y_{2k})$  that are $p$-bad for
all $p|Q$ is at most $M^{2k}/ Q^{\frac k3}$, and summation over these
$Q$ gives the contribution \be\label{3.54} 2^{\omega(q_r)}\;
{M^{2k}\over Q^{\frac k3}}<{M^{2k}\over q_r\;^{\frac 14 k\tau}}.\ee

Hence, in summary, the number of $(y_1,\ldots, y_{2k})$ for which
$(R_{y_1, \ldots, y_{2k}}, q_r)$ is not admissible is at most
$$M^{2k}(M^{-\frac k8}+
q_r^{-\frac 14k\tau}).$$
From \eqref{3.2}-\eqref{3.3} and \eqref{3.6}-\eqref{3.8} we obtain the estimate
$$ \sum_{x\in I} \chi(x)
e^{if(x)}<N\big(M^{d+1}\big)^{\frac{d+1}{2k}}\Big[M^{-\frac k8}+q_r^{-\frac 14 k\tau}+e^{-\sqrt{\log q_r}}
\;\Big]^{\frac 1{2k}}$$
using Theorem \ref{TheoremGR}'' for the contribution of {\it good} tuples $(y_1,\ldots, y_{2k})$. Here
we need to assume
\be\label{3.55}  2k<\big(\log q_r\big)^{\frac 18},\ee
which also implies \eqref{3.50} and \eqref{3.52}, under assumption (i) and if \eqref{3.51} holds.

Take  $k=50 d^2$, assuming \be\label{3.57}d<\frac 1{10}\Big(\log q_r\Big)^{\frac 1{16}},\ee
(which implies \eqref{3.55},) then
$$ \begin{aligned}\sum_{x\in I} \chi(x)e^{if(x)}< &\;
N M^{\frac{(d+1)^2}{2k}}\Big ( M^{-\frac 1{16}}+e^{-\frac{\sqrt{\log q_r}}{2k}}\Big)\\
< &\; N\bigg(M^{-\frac 1{1600}} +\bigg( \frac {M^{(d+1)^2}}{e^{\sqrt{\log q_r}}}\bigg)^{\frac 1{100d^2}}\bigg).\end{aligned}$$

Choose $$M=\Big[\exp\Big(\frac{\sqrt{\log {q_r}}}{2(d+1)^2}\Big)\;\Big].$$ (So \eqref{3.51} is also satisfied.) We have
$$ \sum_{x\in I} \chi(x)
e^{if(x)}< N e^{-\frac {\sqrt{\log q_r}}{200d^2}}.$$

Thus we proved

\begin{theorem}\label{Theorem3}
Assume $q = q_1 \dots q_r$  with $(q_i,q_{j})=1$ for $i\not = j$, and $q_r$ square-free.
Factor
$
\chi=\chi_1\ldots \chi_r,$
where $\chi_i (\mod q_i)$
is arbitrary for $i<r$, and primitive for $i=r$.

 We further assume

{\rm(i)}. For all $ p | q_r, p > \sqrt{\log q_r}.$

{\rm(ii)}. For all $i$, $q_i< N^{1/3} $.

{\rm(iii)}. $r < c \log \log q$ for some $c<1/4-\epsilon$.

Let $f(x) \in\mathbb R[x]$ be an arbitrary polynomial of degree $d$. Assume
$$d<\frac 1{10} \Big(\log q_r\Big)^{\frac 1{16}}.$$
Then \be\label{3.17} \Big|\sum_{n\in I} e^{if(n)} \chi(n)\Big| <
CN e^{-\frac {\sqrt{\log q_r}}{200d^2}},
\ee where $I$ is an interval
of size $N$.
\end{theorem}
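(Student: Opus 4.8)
The plan is to combine the device for eliminating the exponential factor $e^{if(x)}$ from the proof of Theorem~\ref{Theorem1} with the Graham--Ringrose amplification already established in Theorem~\ref{TheoremGR}''. First I would peel off the factor $\chi_1$: shifting the interval $I$ by the multiples $q_1y$, $0\le y<M$, changes $\sum_{x\in I}\chi(x)e^{if(x)}$ by only $O(q_1M)$, so it equals $\frac1M\sum_{x,y}\chi(x+q_1y)e^{if(x+q_1y)}+O(q_1M)$; by the $q_1$-periodicity of $\chi_1$ we may write $\chi(x+q_1y)=\chi_1(x)\,\mathcal Y_1(x+q_1y)$ with $\mathcal Y_1=\chi_2\cdots\chi_r$, and since $|\chi_1(x)|=1$ it suffices to bound $\frac1M\sum_{x\in I}\bigl|\sum_{0\le y<M}\mathcal Y_1(x+q_1y)e^{if(x+q_1y)}\bigr|$. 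This is legitimate as soon as $M\cdot\max_iq_i<N$ and $M<\sqrt N$, constraints I will honour at the optimization stage.

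Next I would remove the phase. Expanding $f(x+q_1y)=f_0(x)+f_1(x)y+\cdots+f_d(x)y^d$, cover $\mathbb T^{d+1}$ by $\sim(M^d/\theta)^{d+1}$ boxes of radius $\theta/M^d$ and split $I$ into the sets $\Omega_\alpha$ according to which box the vector $(f_0(x),\dots,f_d(x))$ falls into. On each $\Omega_\alpha$ the phase $e^{if(x+q_1y)}$ coincides, up to $O(\theta)$, with a unimodular function $C_\alpha(y)$ of $y$ alone, so the inner sum becomes $\sum_yC_\alpha(y)\mathcal Y_1(x+q_1y)+O(\theta M)$. Applying H\"older with exponent $2k$ to the triple sum $\sum_\alpha\sum_{x\in\Omega_\alpha}$ collapses the cell index and yields, up to $O(\theta N)$, the bound
\[N\Big(\frac{M^d}{\theta}\Big)^{\frac{d+1}{2k}}\Big[\frac1{NM^{2k}}\sum_{0\le y_1,\dots,y_{2k}<M}\Big|\sum_{x\in I}\mathcal Y_1\big(R_{y_1,\dots,y_{2k}}(x)\big)\Big|\Big]^{\frac1{2k}},\]
where $R_{y_1,\dots,y_{2k}}(x)=\prod_{j\le k}(x+q_1y_j)/\prod_{j>k}(x+q_1y_j)$ has degree $2k$.

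Now I would estimate the inner sums by splitting the tuples $(y_1,\dots,y_{2k})$ according to whether $(R_{y_1,\dots,y_{2k}},q_r)$ is admissible. On the admissible tuples Theorem~\ref{TheoremGR}'' gives the gain $Ne^{-\sqrt{\log q_r}}$, which requires $2k<(\log q_r)^{1/8}$. The \emph{main obstacle} is controlling the non-admissible tuples. If $(R_{y_1,\dots,y_{2k}},q_r)$ is not admissible, some divisor $Q\mid q_r$ with $Q>q_r^{\tau}$ has the feature that for every prime $p\mid Q$ the residues of $y_1,\dots,y_{2k}$ modulo $p$ occupy at most $k$ values. I would break this into three cases on the primes of $Q$: (a) some $p\mid Q$ with $p>\sqrt M$, where a direct count $\binom{2k}{k}M^kk^k(1+M/p)^k$ summed over $p\mid q_r$ gives $\ll M^{7k/4}$ once $k<M^{1/5}$ and $\log q_r<M$; (b) $Q>M$ with every $p\mid Q$ at most $\sqrt M$, where picking $Q_1\mid Q$ with $\sqrt M<Q_1<M$ bounds the relevant tuples by $(M/Q_1)^{2k}\prod_{p\mid Q_1}\binom{2k}{k}p^kk^k\ll M^{2k}Q_1^{-k/3}$, summing to $\ll M^{15k/8}$ when $k<\min_{p\mid q_r}p^{1/3}$; (c) $q_r^{\tau}<Q<M$, where the count is $\ll M^{2k}Q^{-k/3}$, summing to $\ll M^{2k}q_r^{-k\tau/4}$. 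Thus the non-admissible tuples number $\ll M^{2k}\bigl(M^{-k/8}+q_r^{-k\tau/4}\bigr)$, and on them I use the trivial bound $N$ for the inner character sum.

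Assembling the pieces gives $\sum_{x\in I}\chi(x)e^{if(x)}\ll N(M^d/\theta)^{(d+1)/2k}\bigl[M^{-k/8}+q_r^{-k\tau/4}+e^{-\sqrt{\log q_r}}\bigr]^{1/2k}$. Finally I would choose parameters: $\theta=1/M$ and $k=50d^2$, so that the hypothesis $d<\frac1{10}(\log q_r)^{1/16}$ forces $2k<(\log q_r)^{1/8}$ and, via assumption~(i), the constraints in (a)--(c); the bracket then simplifies to essentially $M^{-1/16}+e^{-\sqrt{\log q_r}/2k}$ and the prefactor to $M^{(d+1)^2/2k}$. Taking $M=\big\lceil\exp\big(\sqrt{\log q_r}/(2(d+1)^2)\big)\big\rceil$ balances the two surviving terms and produces the claimed $\ll Ne^{-\sqrt{\log q_r}/200d^2}$. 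I would close by verifying that this $M$ meets $M<\sqrt N$ and $M\cdot\max_iq_i<N$, which is immediate from assumption~(ii) together with $N<q$, since $\log M\ll\sqrt{\log q_r}\le\sqrt{\log q}$.
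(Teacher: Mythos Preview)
Your proposal is correct and follows essentially the same route as the paper: the shift-and-average to peel off $\chi_1$, the cell decomposition of $\mathbb T^{d+1}$ to freeze the phase, the H\"older step producing the rational functions $R_{y_1,\dots,y_{2k}}$, the same three-case count of non-admissible tuples, and the identical parameter choices $\theta=1/M$, $k=50d^2$, $M=\big\lfloor\exp\big(\sqrt{\log q_r}/(2(d+1)^2)\big)\big\rfloor$. The bounds you quote for cases (a)--(c) and the auxiliary constraints $2k<(\log q_r)^{1/8}$, $k<M^{1/5}$, $\log q_r<M$, $k<\min_{p\mid q_r}p^{1/3}$ all match the paper's treatment.
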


Combined with Postnikov (as in the proof of Theorem \ref{Theorem2}), Theorem \ref{Theorem3} then implies

\medskip

\noindent{\bf Theorem 4'}
{\it Suppose $q = q_0 \dots q_r$  with $(q_i,q_{j})=1$ for $i\not = j$, and $q_r$ square-free. Assume $\bar q_0|q_0$ and $q_0|(\bar q_0)^m$ for some $m\in\mathbb N$, and
$$m<\frac 1{20} \Big(\log q_r\Big)^{\frac 1{16}}.$$
Factor
$
\chi=\chi_0\ldots \chi_r,$
where $\chi_i (\mod q_i)$
is arbitrary for $i<r$, and primitive for $i=r$.

 We further assume

{\rm(i)}. For all $ p | q_r, p > \sqrt{\log q_r}.$

{\rm(ii)}. For all $i$, $q_i< \big(N/\bar q_0\big)^{1/3} $.

{\rm(iii)}. $r < c \log \log q$ for some $c<1/4-\epsilon$.

Then
\be\label{3.18} \Big|\sum_{n\in I}\chi(n)\Big|< CN e^{-\frac {\sqrt{\log q_r}}{800m^2}}, \ee
 where $I$ is an interval
of size $N$.}

Note that for Theorem 4' to provide a nontrivial estimate, we should assume at least
$$
r\ll \log\log q_r
$$
and
$$
\log m\ll \log\log q_r.
$$

\section{The main theorem.}

Theorem \ref{Theorem3}' as a consequence of Theorem \ref{Theorem3}
was stated mainly for expository reason. (cf. \S7. Proposition 7.) Our
goal is
 to develop this approach further in order to prove the following stronger result.

\begin{theorem}\label{Theorem8}
Assume $N$ satisfies
\be\label{15.1}
q>N>\max_{p|q} p^{10^3}
\ee
and
\be\label{15.2}
\log N>(\log q)^{1-c} +C \log \Big(2\frac {\log q}{\log q'}\Big) \;\frac {\log q'}{\log\log q}\;,\ee
where $C, c>0$ are some constants, $($e.g. we may take $c= 10^{-3}$ and $C\sim 10^3)$ and $q'=\prod_{_{p|q}}p$.

Let $\chi$ be primitive $(\mod q)$ and $I$ an interval of size $N$.
Then
\be\label{15.3} \Big|\sum_{x\in I} \chi(x)\Big| \ll N
e^{-\sqrt{\log N}}. \ee

\end{theorem}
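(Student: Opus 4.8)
The plan is to reduce Theorem~\ref{Theorem8} to Theorem~4' by an appropriate factorization of the modulus $q$, mimicking the reduction of Theorem~\ref{TheoremGR} to Theorem~3' carried out in Remark~3.1, but now tracking the prime-power part through Postnikov as well. Write $q = q_0 \cdot (\text{squarefree part})$, where $q_0$ collects all prime powers $p^a$ with $a\ge 2$ dividing $q$ together with all the ``tiny'' primes $p\le \sqrt{\log q}$; concretely, first split off $\bar q_0 = \prod_{p^a\|q,\,a\ge 2} p$ and choose $m$ minimal with $q_0 \mid \bar q_0^{\,m}$, then absorb the small primes. One has $\log \bar q_0 \le \log q'$ and, crucially, $m \le \log q / \log \bar q_0$; combined with the hypothesis \eqref{15.2} one checks $m < \tfrac1{20}(\log q_r)^{1/16}$ once $q_r$ (defined below) is seen to satisfy $\log q_r \gtrsim \log q / \log\log q$, which is forced by \eqref{15.2}.

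Next, as in Remark~3.1, greedily bundle the remaining large prime factors (those $>\sqrt{\log q}$) into blocks $q_1,\dots,q_r$ with each $q_i$ just below $(N/\bar q_0)^{1/3}$ — possible because each such prime is at most $N^{1/10^3}\ll (N/\bar q_0)^{1/3}$ by \eqref{15.1}, after noting $\bar q_0 \le q' $ and $\log q' \ll \log N$ from \eqref{15.2}. This gives $q > (N/\bar q_0)^{(r-1)/3}$ up to the bundling, hence $r \lesssim \log q / \log N \lesssim \log\log q$ using \eqref{15.2} again (the term $C\log(2\log q/\log q')\,\log q'/\log\log q$ is exactly what is needed to keep $\log N$ comparable to $\log q'$ while $(\log q)^{1-c}$ dominates the ``$(\log q)^{9/10}$''-type loss). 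The last block $q_r$ then satisfies $\log q_r \ge \tfrac13 \log(N/\bar q_0) \gtrsim \log N$, so that $e^{-\sqrt{\log q_r}/(800 m^2)}$ can be pushed below $e^{-\sqrt{\log N}}$; here one uses $m$ bounded by a small power of $\log q_r$, so $\sqrt{\log q_r}/m^2 \ge (\log q_r)^{1/2-1/8} \gg \sqrt{\log q_r}\cdot(\text{something}) $ — wait, more carefully, one wants $\sqrt{\log q_r}/(800 m^2) \ge \sqrt{\log N}$, which follows once $\log q_r \ge C' m^4 \log N$; since $\log N \le \log q_r /\tfrac13 \cdot 3$ roughly and $m$ is only polylog, this needs the extra room that \eqref{15.2}'s main term $(\log q)^{1-c}$ provides over a bare $\log q'$, so the exponents have to be chased honestly. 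With (i), (ii), (iii) of Theorem~4' verified and the bound on $m$ in hand, Theorem~4' yields \eqref{15.3}.

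The main obstacle, and where the real work lies, is the bookkeeping that makes all of (i)--(iii) of Theorem~4' plus the constraint on $m$ hold \emph{simultaneously} under the single hypothesis \eqref{15.2}: the small-prime product must be shown negligible (an $e^{\sqrt{\log q}}$-type bound as in \eqref{GR1}), the blocking must terminate with $\log q_r$ genuinely of order $\log N$, and the prime-power exponent $m$ must be controlled by $\log q/\log q'$ which is precisely the quantity appearing inside the logarithm in \eqref{15.2}. I expect the delicate point to be the case where $\log q'$ is much smaller than $\log q$ (so $m$ is large): then one must verify that $\log(2\log q/\log q')\cdot \log q'/\log\log q \gtrsim m$ — i.e. $\log m + \log\log q' \gtrsim \log\log q$ in the relevant regime — so that \eqref{15.2} still delivers $m < \tfrac1{20}(\log q_r)^{1/16}$; checking this, and that the resulting exponent $\sqrt{\log q_r}/(800 m^2)$ beats $\sqrt{\log N}$, is the crux. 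Everything else — the two elementary reductions (split off prime powers, Postnikov; then greedily block the squarefree large-prime part) — is routine given Theorem~4' and the computation in Remark~3.1.
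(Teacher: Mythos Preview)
Your reduction to Theorem~4' cannot close. Two concrete problems:

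\textbf{(1) The bound on $m$ is false.} You claim $m \le \log q/\log \bar q_0$, but $m$ is the \emph{maximum} exponent $\nu_p$ over $p\mid\bar q_0$, whereas $\log q \ge \sum_{p\mid\bar q_0}\nu_p\log p$ only gives $m\ge \log q_0/\log\bar q_0$. A modulus like $q=2^{100}\cdot 3^2\cdot 5^2\cdots p_k^2$ has $m=100$ while $\log q/\log\bar q_0$ can be made $\ll 100$. So there is no a priori control of the single Postnikov exponent $m$ from \eqref{15.2}.

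\textbf{(2) Even granting a bound on $m$, Theorem~4' is too weak.} Its conclusion is $Ne^{-\sqrt{\log q_r}/(800m^2)}$, and your squarefree block $q_r$ necessarily satisfies $q_r<(N/\bar q_0)^{1/3}$, so $\log q_r\lesssim\log N$. Thus $\sqrt{\log q_r}/(800m^2)\le \sqrt{\log N}/800$ already for $m=1$, and you can never reach the target $e^{-\sqrt{\log N}}$. Your own parenthetical ``wait, more carefully\ldots'' flags exactly this: you would need $\log q_r\gtrsim m^4\log N$, which is impossible under condition~(ii).

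The paper does \emph{not} reduce to Theorem~4' (it says explicitly that Theorems~3' and~4' are expository). Instead it proves a new inductive Lemma~6 allowing each intermediate block $Q_s$ to carry its own exponent $m_s$ (so the prime-power structure is distributed, subject only to $\prod m_s<(\log q_0)^{1/75}$), and selects the terminal block $q_r=q_0^m$ by a pigeonhole on the level sets $\{p:\nu_p=m\}$ to force $m\le(\log N)^{3\kappa}$ while keeping $q_0$ large. The iteration of Lemma~6 leaves a sum $\sum\chi_r(g(x))$ with $\chi_r$ primitive mod $q_0^m$; when $m=1$ this is handled by Weil (Remark~3.2), but when $m>1$ the paper applies Postnikov once more and then \emph{Vinogradov's exponential-sum lemma} rather than Weil, which is what produces the required saving without an $m^2$ loss in the final exponent. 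That Vinogradov step is the missing idea in your plan.
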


\medskip

We will prove Theorem 5 in the next two sections. In this section, we make some further technical specifications which will be important in the proof. Also, we  discuss assumption \eqref{15.2} and compare it with the assumptions in known results. Some of the remarks at the end of this section (Remarks 5.2 and 5.3) will be used in the proof as well.

\smallskip

\noindent{\it Claim.} We may make the following assumptions.
\be\label{15.4}(1.)\;\; q'>N^{\frac 1{200}}\qquad\qquad\qquad\qquad\qquad\qquad\qquad\qquad\qquad\qquad
\qquad\qquad\qquad\qquad\ee
$$(2.)\;\; q=Q\; q_r=Q_1\cdots Q_{r-1}\;q_r, \text{ where }  (Q_i, Q_j)=(Q_i, q_r)=1,
 \qquad\qquad\qquad\qquad\qquad\qquad\qquad\qquad\qquad\qquad\qquad\qquad\qquad\qquad $$
 \be\label{15.9}  r=1+10 \bigg[\frac{\log Q \overline{}'}{\log N}\bigg],\ee
 (where $Q'$ is the core of $Q$,) and
 \be\label{15.8}q_r=q_0^m, \;\; q_0 \text{ square-free }\ee with
\be\label{15.5} e^{ (\log N)^{\frac 34}}<q_0<N^{\frac 1{10}},\ee
\be\label{15.6} m\leq (\log N)^{3c}.\ee

Also, the core of
$Q_s$ satisfies \be\label{15.7} Q_s'<N^{\frac 15}\;\text{ for }
s=1,\ldots, r-1.\ee

Moreover, we may assume
\be\label{1.10.2013} \forall p|q_r,\;\; p>\sqrt{\log q_r}.\ee

\noindent (3.) There exist $q_1, \cdots, q_{r-1}$, $(q_i, q_j)=(q_i, q_r)=
1$,
\be\label{15.11}\max_i q_i<N^{1/2},\ee such that
\be\label{15.10}q=Q_1\cdots Q_{r-1}q_r\; |\; q_1^{m_1}\cdots q_{r-1}^{m_{r-1}}q_r,\ee
with\be\label{15.12} m_s=10\bigg[\frac{\log Q_s}{\log N}\bigg].\ee

\noindent{\it Proof of Claim.}

To verify Assumption (1), we will obtain the bound \eqref{15.3} for the case $q'\leq N^{\frac 1{200}}$ by using
 Theorem 12.16 in \cite{IK}. The latter provides the following bound
\be\label{15.13}
\Big|\sum_{M<x\leq \widetilde{M}} \chi(x)\Big|< C^{s(\log s)^2} M^{1-\frac{c}{s^{2}\log s}}
\ee
with $s=\frac{\log q}{\log M}$, assuming that $q'^{100}<M<\widetilde{M}\leq 2M$.  This gives a nontrivial bound $M^{1-\frac{c}{s^{2}\log s}}$ provided $\log M \gtrsim(\log q)^{\frac{3}{4}+\epsilon}$. We will use this result by dividing our interval $[1,N]$ dyadically.

Let $M_1=N e^{-\sqrt{\log N}}$, $M_i=2^{i-1}M_1$ and $s_i=\frac{\log q}{\log M_i}$, for $i=2, \cdots, m=\sqrt{\log N}$.

We divide $[1,N]$ into subintervals $$[1, N]=[1, M_1]\;\bigcup_{i=1}^{m-1} \;(M_{i}, M_{i+1}],$$ and note that  $M_i>N^{1/2}>q'^{100}$, and $\log M_i\sim\log N$. Hence $s_i\sim\frac{\log q}{\log N}:=r$ for $i\geq 1$.

We bound the character sum $\sum_{i=1}^N \chi(x)$ by bounding subsum over each subinterval, using the trivial bound for the interval $[1, M_1]$ and  Theorem 12.16 in \cite{IK} for the intervals $(M_{i}, M_{i+1}]$. It is straightforward to check that the sum of all bounds is bounded by$$N e^{-\sqrt{\log N}}+m\; M_m^{1-\frac{c}{r^{2}\log r}} \ll N e^{-\sqrt{\log N}},$$ if
$\log N \gtrsim(\log q)^{\frac{4}{5}+\epsilon}$, which follows from \eqref{15.2}.

To see Assumption (2), we first note that
\be\label{15.14} \prod_{\nu_p >(\log N)^{3c}}p<q^{(\log N)^{-3c}}<e^{(\log N)^{1-c}}<q'^{\frac 1{200}},\ee
where $\nu_p$ is the exponent of $p$ in the prime factorization of $q$. In the display above, the second inequality follows from \eqref{15.2} (which implies that $\log q<(\log N)^{1+2c}$), and the last inequality follows from Assumption (1) in the claim and provided $\log q'$ is sufficiently large.

From \be\label{1.10.2013.3}\prod_{m=1}^{(\log N)^{3c}}\big(\prod_{\nu_p=m}p\big)>q'^{\frac{99}{100}},\ee there exists $m\leq (\log N)^{3c}$ such that
$$\prod_{\nu_p=m}p>(q')^{\frac 12(\log N)^{-3c}}>e^{\frac 1{400}(\log N)^{1-3c}}>e^{(\log N)^{\frac 34}}.$$
The second inequality in Assumption \eqref{15.1}  that $\;\max_{p|q} p<N^{10^3}$ ensures that we may take $$q_0\Big|\prod_{\nu_p=m}p\;\;\text{ such that }\;\;q_0<N^{\frac 1{10}}.$$
Therefore, there exists a $q_r$ which satisfies \eqref{15.8}-\eqref{15.6}.

To see \eqref{1.10.2013}, we note that in the inner product in \eqref{1.10.2013.3}, we may impose the condition that $p>\sqrt{\log q_r}$, because
\be\label{1.10.2013.2}\prod_{p<\sqrt {\log q_r}} p<e^{2\sqrt {\log q_r}}<e^{(\log N)^{\frac 23}}<q'^{\frac 1{200}}.\ee
(The second inequality follows from \eqref{15.8}-\eqref{15.6}.)

Write $q=Q\; q_r$, and $$Q=\prod p_i^{\nu_i},\quad\text{ where } \; \nu_i:=\nu_{p_i}\;\text{ and } \nu_1\geq \nu_{2}\geq\cdots.$$ Let $Q'=\prod_{\nu_i\geq 1}p_i$ be the core of $Q$, and factor
$$Q'=Q_1'\cdots Q_{r-1}' \text{ such that } Q_s'<N^{\frac 15} \text{ and } r=1+10 \bigg[\frac{\log Q \overline{}'}{\log N}\bigg].$$
For each $s$, define $Q_s =\prod_{p|Q_s'}p^{\;\nu_p}$ and $q_s=\prod_{p|Q_s'} p^{\;\bar\nu_p}$ as follows
\be\label{15.15}
\bar\nu_p =\begin{cases} \big[\frac{\nu_p}{m_s}\big]+ 1, \text {  if } \nu_p> m_s\\ 1,\quad \text { otherwise}.\end{cases}
\ee
Denote
$$
m_s = 10\frac {\log Q_s}{\log N}.
$$
It follows that $Q_s|(q_s)^{m_s}$ and $q_s < Q_s' Q_s^{\frac 2{m_s}}< N^{\frac 12}$, which are \eqref{15.11}-\eqref{15.10}.

\medskip

\noindent{\it Remark 5.1.} Assumption  \eqref{15.2}  can be reformulated as
\be\label{15.16}
\Big(3\;\frac {\log q}{\log q' N}\Big)^{10\frac{\log N q'}{\log N}} <(\log N)^c.
\ee

\noindent{\it Remark 5.2.} Using \eqref{15.4}, \eqref{15.16} and the
inequality of arithmetic and geometric means, one can show that
\be\label{15.17} \prod_{i=1}^{r-1} m_i < \big(\log q_0\big)^{\frac
1{75}}.\ee

\noindent{\it Remark 5.3.} It is easy to check that \eqref{15.2} and \eqref{15.5} imply
 \be\label{15.18}r < 10^{-3} \log \log q_0.\ee

\noindent{\it Remark 5.4.}
If $\log q'\leq \log N$, \eqref{15.16} becomes
$$
\log N>(\log q)^{1-c},
$$
which is similar to Theorem 12.16 in \cite{IK}.

\noindent{\it Remark 5.5.} If $q=q'$ (i.e. $q$ is square-free),
condition \eqref{15.16} becomes
$$
\frac{\log q}{\log\log q} < c\log N.
$$
This is slightly better than Corollary 12.15 in \cite{IK} and essentially optimal in view of the Graham-Ringrose argument.

\section{The proof of Theorem \ref{Theorem8}.}

The following lemma is the technical part of the inductive step. It is based on the techniques proving Theorem 2 and Theorem 4, which are shifting product and averaging (Graham-Ringrose), replacing $\chi_i$ by an additive character of a polynomial(Postnikov), and the mixed character technique to drop the additive character.

\medskip

\noindent{\bf Lemma 6.} {\it Assume

\noindent $(a).\;$ $q=q^{m}_{1} \hat q, $ where $\hat q=\hat {\hat q}q_r$, with $q_1, \hat {\hat q}, q_r$ mutually coprime.

\noindent $(b).\;$ $\chi=\chi_1 \hat \chi,$ with $
\chi_1 (\mod q^{m}_1)$ and $ \hat \chi (\mod \hat q)$.
$$(c).\;\; f(x)
=\prod^\beta_{\alpha=1} (x-a_\alpha)^{d_\alpha},\;  d=\sum|d_\alpha|, \text{ with } a_\alpha
\in\mathbb Z \text{ distinct and }d_\alpha\in\mathbb Z\setminus\{0\}
.\qquad\qquad\qquad\qquad\qquad\qquad\qquad
$$
\noindent $(d).\;$ there exists $ \bar q|q_r$ such that for each $p|\bar q$, $p>\sqrt {\log q_r}$ and $f$ is $p$-good.

\noindent $(e).\;$ $I$ an interval of length $N$, $\;q_1^2<N<q$,  $\;1440\cdot m^2d<(\log N)^{\frac 1{5}}$.

\noindent $(f).\;$ $M\in \mathbb Z$, $\;\log N+\log \bar q< M< N^{\frac 1{10}}, \;M<\bar q^{\;\tau}$.

 Then
\be\label{6.4}
 \frac{1}{N} \Big|\sum_{x \in I} \chi \big(f(x)\big)\Big|<
M^{-1/15} +  M^{^{\frac 1{60}}} \Big|\frac 1N\sum_{x \in I} \hat \chi
\big(f_1(x)\big)\Big|^{\frac 1{60m^2}}, \ee where $f_1(x)$ is of the
form \be\label{6.5} f_{1}(x) = \frac{\prod^{k}_{\nu=1} f(x+q_{1}
t_{\nu})} {\prod^{2k}_{\nu=k +1} f(x+q_{1} t_{\nu})} =
\prod^{\beta'}_{\alpha'=1} (x-b_{\alpha'})^{d_{\alpha'}}\ee with
$b_{\alpha'}, d_{\alpha'} \in \mathbb{Z}$, $2k=60 m^2$ and
\be\label{6.6} d^{(1)}:=\sum\mid d_{\alpha'}\mid \leq 60\;d\;m^2.\ee
Furthermore, $(f_1, \bar q)$  is admissible.}

\noindent{\it Proof.} Take $t\in[1,M]$. Clearly, \be\label{4.6.4}
\chi\big(f(x+tq_1)\big)= \chi_1\big(f(x)\big)\chi_1\bigg(1+\frac
{f(x+tq_1)-f(x)}{f(x)}\bigg)\hat \chi\big(f(x+tq_1)\big).\ee Hence, as
in the proof of Theorem \ref{Theorem2},
\be\label{4.6.3}\chi\big(f(x+tq_1)\big)=
\chi_1\big(f(x)\big)\hat \chi\big(f(x+tq_1)\big) e_{q_1^m}
\Big(\sum^{m-1}_{j=1} Q_j (x) \;q_1^j\; t^j\Big), \ee where
\be\label{4.6.5} Q_j(x) =\frac 1{j!}\frac {d^j}{dt^j}\Big\{
F\Big(\frac {f(x+t)-f(x)}{f(x)}\Big) \Big\}\Big|_{t=0} \ee with
\be\label{4.6.6} F(x) =\sum_{s=1}^{2m} (-1)^{s-1} \frac 1s x^s
\qquad \text {(up to a factor)}. \ee

We estimate $\sum_{x\in I}\chi(f(x))$ by the same technique as used in the proof of Theorem \ref{Theorem3} with $d=m-1$ (see \eqref{3.2}-\eqref{3.8}). After averaging and summing over $t\in M$, and applying H\"older's inequality,
we remove the last factor in \eqref{4.6.3} and obtain
\be\label{12.29}
\frac 1N\left|\sum_{x\in I}\chi(f(x))\right|\ll\bigg[ \frac {M^{\;m^2}}{NM^{2k}}
 \sum_{0\leq t_1, \ldots, t_{2k}<M} \Big|\sum_{x\in I} \hat\chi
\big(R_{t_1, \ldots , t_{2k}}(x) \big)\Big|\bigg]^{\frac
1{2k}}
.\ee
Here $$R_{\underline{t}}(x):=R_{t_1, \ldots, t_{2k}} (x) =\frac {f(x+q_1t_1)\cdots f(x+q_1 t_k)}{f(x+ q_1 t_{k+1})\cdots f(x+q_1 t_{2k})}.
$$

 Choose $\underline{t}=(t_1,\cdots, t_{2k})\in [1, M]^{2k}$ such that
 $f_1(x)=R_{\underline{t}}(x)$ maximizes the inner character sum in the right-hand-side of \eqref{12.29} among all admissible $(R_{\underline{t}}, \bar q)$. Let $\mathcal B$ be  the set of $ \underline{t}$ such that $(R_{\underline{t}}, \bar q)$ is not admissible. Hence \eqref{12.29} gives
 \be\label{12.30}
 \frac 1N\Big|\sum_{x\in I}\chi(f(x))\Big|\ll M^{^{\frac 1{60}-1}}|\mathcal B|^{^{\frac 1{2k}}}+ M^{^{\frac 1{60}}} \Big|\frac 1N\sum_{x \in I} \hat \chi
\big(f_1(x)\big)\Big|^{\frac 1{60m^2}}.
\ee

 We want to give an upper bound on $|\mathcal B|$.

 \noindent{\it Claim.} $|\mathcal B| \ll M^{2k-\frac k6}.$.

 \noindent{\it Proof of Claim.} First, we observe that
 the zeros or poles of $R_{\underline{t}} (x)$ are of the form
\be\label{6.7}
b_{\alpha'}:=a_{\alpha} -t_{\nu}q_{1} \; \text{ with } t_{\nu}\in[1,M].
\ee

Second, we note that while applying H\"older's inequality to obtain \eqref{12.29}, we take $k\in \mathbb Z^+$  satisfying
\be\label{6.12} 48kd<\big(\log N\big)^\frac 1{10}\; \text{ and } k>30.\ee

To bound $|\mathcal B|$, we fix $p|\bar q$. In
 $R_{\underline{t}}(x) =\prod^{\beta'}_{\alpha'=1} (x-b_{\alpha'})^{d_{\alpha'}}$, we may assume $d_1=1$ and $a_1\not
=a_{\alpha}$ (mod $p$) for any $\alpha >1$. Recalling \eqref{6.7},
assume that none of the $a_1-t_{\nu}q_{1}, 1\leq \nu\leq 2k$, is
simple (mod $p$). This means that for each $\nu$ there is a pair
$(\alpha(\nu),\sigma    (\nu))$  in $\{1, \dots, \beta\} \times \{1,
\dots, 2k\}$ such that $\alpha(\nu)\neq 1,$ $\sigma(\nu)\neq\nu$ and
\be\label{5.8} a_{1}-t_{\nu}q_{1}\equiv
a_{\alpha(\nu)}-t_{\sigma(\nu)}q_{1}  \ (\mod p). \ee The important
point is that $\sigma(\nu)\neq \nu$ for all $\nu$, by assumption on
$a_{1}$. One may therefore obtain a subset $S \subset \{1,\dots,
2k\}$ with $| S | = k$ such that there exists $S_1\subset S$ with
$|S_1|=\frac k2$ and \be\label{5.9}
 S_{1} = \{ \nu\in S : \sigma (\nu)\notin S_1 \}.
\ee
(The existence of $S$ and $S_1$ satisfying this property is justified in Fact 6.1 following the proof of this lemma.)

Specifying the values of $t_{\nu'}$ for those $\nu' \in
\{1,\cdots, 2k\}\setminus
S_{1}$, equations \eqref{5.8} will determine the remaining
values, after
specification of $\alpha(\nu)$ and $\sigma (\nu)$.
An easy count shows that
\be\label{6.10}
\begin{aligned} &\big|\{\pi_p(\underline{t}): \;R_{\underline{t}}  \;\text{ is $p$-bad } \}\big|\\
\leq &\begin{pmatrix}{2k}\\{k} \end{pmatrix} \
\begin{pmatrix}{k}\\{\frac{k}{2}}\end{pmatrix}
\bigg(\frac 32 \;k\bigg)^{\frac{
k}{2}}\bigg(\frac{\beta}2\bigg)^{\frac{k}{2}}\big( \mathcal M\big)^{\frac{3k}{2}}< (48 kd)^{\frac k2} \big( \mathcal M\big)^{\frac{3k}{2}},
\end{aligned}
\ee
where $\mathcal M= \min(M, p)$.
The first factor counts the number of sets $S$, the second the number of sets $S_1$, and the third and the forth the numbers of maps $\sigma|_{S_1}$ and $ \alpha|_{S_1}$.

Applying assumptions (e)-(f) to \eqref{6.10}, we obtain
\be\label{6.11}\big|\{\pi_p(\underline{t}): \;R_{\underline{t}}  \;\text{ is $p$-bad } \}\big|
<\big( \mathcal M\big)^{\frac{8}{5}k}.\ee

If $(R_{\underline{t}} , \bar q)$ is not admissible, there is some $Q|\bar q$, $Q>q_r^{\tau}>\bar q^{\;\tau}$ such that for each $p|Q$, $R_{\underline{t}} $ is $p$-bad. As in the proof of Theorem \ref{Theorem3}, we distinguish several cases.

\smallskip

\noindent{\it $(a).$ There is $p|Q$ with $p>M$.}

Hence, $\big|\{\underline{t}\in [1, M]^{2k}: R_{\underline{t}}  \text{ is $p$-bad } \}\big|
< M^{\frac{8}{5}k}$ and summing over $p$ gives the contribution $M^{\frac{8}{5}k}\log\bar q$.
$$(b). \; \sqrt  M< \max_{p|Q} p <M.\qquad\qquad\qquad\qquad\qquad\qquad\qquad\qquad\qquad\qquad\qquad\qquad\qquad\qquad
\qquad\qquad\qquad\qquad\qquad\qquad\qquad\qquad\qquad\qquad\qquad\qquad
\qquad\qquad\qquad\qquad\qquad\qquad\qquad\qquad\qquad$$
Then
$$\begin{aligned}&\big|\{\underline{t}\in [1, M]^{2k}: R_{\underline{t}}  \text{ is $p$-bad } \}\big|\\
\leq &\Big(\frac Mp+1\Big)^{2k}\big|\{\pi_p(\underline{t}): R_{\underline{t}}  \text{ is $p$-bad } \}\big|\\
\leq & \Big(\frac Mp+1\Big)^{2k}p^{\frac 85k}<M^{2k}\Big(\frac p{32}\Big)^{-\frac 25 k}<(4M)^{\frac 95k}.\end{aligned}$$
Summing over $p$ gives the contribution $(4M)^{\frac 95k}\log \bar q$.
$$(c). \; \max_{p|Q} p\leq \sqrt M\text{ \it and } Q>M.\qquad\qquad\qquad\qquad\qquad\qquad\qquad\qquad\qquad\qquad\qquad\qquad\qquad\qquad
\qquad\qquad\qquad\qquad\qquad\qquad\qquad\qquad\qquad\qquad\qquad\qquad
\qquad\qquad\qquad\qquad\qquad\qquad\qquad\qquad\qquad$$
Take $Q_1|Q$ such that $\sqrt M< Q_1<M$. Then
$$\begin{aligned}&\big|\{\underline{t}\in [1, M]^{2k}: R_{\underline{t}}  \text{ is $p$-bad for each } p|Q_1 \}\big|\\
\leq &\Big(\frac M{Q_1}+1\Big)^{2k}\big|\{\pi_{Q_1}(\underline{t}): R_{\underline{t}}  \text{ is $p$-bad for each } p|Q_1 \}\big|\\
\leq &\Big(\frac M{Q_1}+1\Big)^{2k}\prod_{p|Q_1} p^{\frac 85k}<M^{2k} \Big(\frac {Q_1}{32}\Big)^{-\frac 25 k}<(4M)^{\frac 95k}.
\end{aligned}$$
Summing over $Q_1$ gives the contribution $(4M)^{\frac 95k +1}$.

\smallskip

Summing up cases (a)-(c) and recalling assumption (f), we conclude that
\be\label{6.12}\begin{aligned}|\mathcal B|=\;&\big|\{\underline{t}\in [1, M]^{2k}: (R_{\underline{t}} ,\bar q) \text{ is not admissible }\}\big|\\
< \;& M^{2k}\big(M^{-\frac k6} +\bar q^{\;\frac {-\tau k}5}\big)\ll M^{2k-\frac k6}. \end{aligned}\ee


 Putting \eqref{12.30} and \eqref{6.12} together, we obtain \eqref{6.4} and the lemma is proved. ${}\;\;\;\;\;\square$

\bigskip

\noindent{\bf Fact 6.1.} {\it Let $\mathcal K=\{1,\cdots, 2k\}$ and $\sigma : \mathcal K\to \mathcal K$
be a function such that $\sigma (\nu)\not=\nu$ for all $\nu\in \mathcal K$. Then there exist subsets
$S_1\subset S\subset \mathcal K$ with $|S_1|=\frac k2, \;|S|=k$ and $\sigma (\nu)\not\in S_1$ for any $\nu\in S$.}

\noindent{\it Proof.} Since the subset of elements of $\mathcal K$ with more than one pre-image of $\sigma$
has size $\leq k$, there exists a subset $S\subset \mathcal K$ with $|S|=k$ such that every $\nu\in S$ has at most one pre-image.
To construct $S_1\subset S$, we choose $\nu_i\in S$ inductively,
such that $\nu_i\not\in\{\nu_1, \ldots,\nu_{i-1}, \sigma(\nu_1), \ldots, \sigma( \nu_{i-1})\}\;\bigcup \;\sigma^{-1}\left(\{\nu_1
, \ldots, \nu_{i-1}\}\right)$ and $\sigma(\nu_i)\not\in  S_1.\quad\square$

\medskip

\noindent{\it Proof of Theorem 5.}

Following the claim in \S5, we will prove the theorem for $\chi=\chi_1\cdots \chi_r$, where $\chi_i (\mod q_i^{m_i})$
is arbitrary for $i<r$, and primitive for $i=r$, and $q=q_1^{m_1}\cdots q_{r-1}^{m_{r-1}}q_r$ satisfies (3) of the claim. We will apply lemma 6 repeatedly. First, we choose $M_{1}<M_{2}< \cdots < M_{r-1}$ a sequence of values of $M$, which will satisfy condition \eqref{26.18}. Then we will iterate \eqref{6.4}, losing a factor $\chi_i$  in $\chi$ and adding an error term $M_i                          ^{-1/12}$ in the bound each time.

  In order to satisfy Assumption (e), we assume \be\label{26.14}
1440\cdot 60^{r-1}\prod_{i=1}^{r}(m_i^2)<\big(\log N\big)^{\frac
1{5}},\ee which follows from \eqref{15.17} and \eqref{15.18}.

Let \be\label{26.22} M=e^{(\log q_r)^{9/10}},\ee and
for $ s=1, \cdots, r-1$, take $M_s,$ such that\be\label{26.18}
\begin{aligned}
M_1^{-\frac 1{15}}=& M^{-1}\\
M_{1}^{\frac 1{60}} M_2^{\frac 1{ 60^2m_1^2}}\cdots M_{s-1}^{\frac
1{60^{s-1}m_1^2\cdots m_{s-2}^2}}M_{s}^{-\frac 1{15\cdot
60^{s-1}m_1^2\cdots m_{s-1}^2}}=&M_1^{-\frac 1{15}}.
\end{aligned}\ee

One checks recursively that \be\label{26.19} M_s\leq M^{5^{s-1}\cdot
15^{s}m_1^2\cdots m_{s-1}^2}.\ee
Indeed, from \eqref{26.18},
$$M_{s-1}^{\;\;\;\frac 1{15\cdot
60^{s-2}m_1^2\cdots m_{s-2}^2}} =M_{s-1}^{\;\;\;\frac {-1}{
60^{s-1}m_1^2\cdots m_{s-2}^2}}M_{s}^{\frac 1{15\cdot
60^{s-1}m_1^2\cdots m_{s-1}^2}}.$$
Therefore, by induction
$$M_s=M_{s-1}^{75m_{s-1}^2}\leq M ^{5^{s-2}\cdot
15^{s-1}m_1^2\cdots m_{s-2}^2 (75m_{s-1}^2)}=M^{5^{s-1}\cdot
15^{s}m_1^2\cdots m_{s-1}^2}.$$

In order to satisfy the last condition in Assumption (f) of Lemma 6,
we assume \be\label{26.23} \sum_{i=1}^{r-1}\log
m_i <\frac 1{40} \log\log q_r,\ee
 (Clearly, this follows from \eqref{15.17}.) and note that
 \be\label{26.21} M^{(5\cdot 15)^{r-1}m_1^2\cdots
m_{r-2}^2}<q_r^{\tau}=q_r^{\frac {10}{\log\log q_r}}.\ee
(Since by \eqref{15.18}, $r < 10^{-3} \log \log q_r$.)

By \eqref{6.4} and  iteration, $\frac{1}{N} \big| \sum_{x=1}^N
\chi(x) \big|$ is bounded by\be\label{26.15}\begin{aligned}
M_{1}^{-\frac 1{15}} + M_{1}^{\frac 1{60}} M_2^{-\frac 1{15\cdot
60m_1^2}}&+M_{1}^{\frac 1{60}} M_2^{\frac 1{ 60^2m_1^2}}M_3^{-\frac
1{15\cdot 60^2m_1^2m_2^2}}+\cdots\\+M_{1}^{\frac 1{60}} M_2^{\frac
1{ 60^2m_1^2}}&\cdots M_{r-2}^{\frac 1{60^{r-2}m_1^2\cdots
m_{r-3}^2}}M_{r-1}^{-\frac 1{15\cdot 60^{r-2}m_1^2\cdots
m_{r-2}^2}}\\+&M_{1}^{\frac 1{60}} M_2^{\frac 1{ 60^2m_1^2}}\cdots
M_{r-1}^{\frac 1{60^{r-1}m_1^2\cdots m_{r-2}^2}}\mathcal S^{\frac 1{
 60^{r-1}m_1^2\cdots m_{r-1}^2}},\end{aligned}\ee where $\mathcal S$
is of the form \be\label{26.16} \mathcal S=\frac
1N\Big|\sum_{x=1}^N\chi_r(f(x))\Big|,\ee with $\chi_r$ primitive
modulo $q_r$, and
$$ f(x)=\prod _{\alpha =1}^{\beta} (x-a_{\alpha})^{d_{\alpha}},
a_{\alpha}, d_{\alpha}\in \mathbb{Z},$$ \be\label{26.17} d = \sum
\mid d_{\alpha} \mid < 60^{r}m^{2}_{1} \dots m^{2}_{r},\ee and $(f,
\bar q)$ admissible for some $\bar q|q_r$, $\bar q >\sqrt {q_r}$.

Condition \eqref{26.18} ensures that each of the $r-1$ fist terms
in \eqref{26.15} is bounded by $\frac 1M$.

Applying \eqref{26.19} to \eqref{26.15} gives
\be\label{26.20} \frac{1}{N} \big| \sum_{x=1}^N \chi(x) \big|
<\frac{r-1}M+M^{(\frac 54)^{r-1}-1}\mathcal S^{\frac
1{60^{r-1}m_1^2\cdots m_{r-1}^2}}.\ee

We will continue the proof of the theorem in the next section by distinguishing two cases.

\medskip

\section{The two cases.}

To finish the proof of Theorem \ref{Theorem8}, we need to bound
$\mathcal S$ in \eqref{26.20}. We will use Claim  in \S5. Recall \eqref{15.8} that
$$q_r=q_0^m, \;\; q_0 \text{ square-free }.$$
We will do induction on $m$.

\medskip

\noindent{\it Case 1. $m=1$.}

Since $(f, \bar q)$ is admissible and $\bar q$ is square-free, we may apply  Remark 3.3 to bound
$\mathcal S$. Therefore,\be\label{26.24} \mathcal
S < \bar q ^{\; -\frac 3{10}} q_r ^{\frac 3{10}\tau}<q_r^{-\frac
17},\ee and by \eqref{26.20} \be\label{26.25}\frac{1}{N} \big|
\sum_{x=1}^N \chi(x) \big| <\frac{r-1}M+M^{(\frac
54)^{r-1}-1}q_r^{-\frac 1{7\cdot 60^{r-1}m_1^2\cdots
m_{r-1}^2}}<\frac rM.\ee The last inequality is by \eqref{15.17} in Remark 5.2 and
\eqref{26.21}.

 Now we use \eqref{26.22} and  \eqref{15.18} to bound \eqref{26.25} and \eqref{15.8}-\eqref{15.6} to obtain \eqref{15.3}.
 $\quad\square$

\medskip

We state the above case as a proposition for its own interest.

 \noindent{\bf Proposition 7.}
{\it Assume $q = q_1^{m_1} \dots q_{r-1}^{m_{r-1}}q_r$  with
$(q_i,q_{j})=1$ for $i\not = j$, $q_r$ square-free and
\be\label{26.1} \prod_{i=1}^{r-1} m_i<\Big(\log q_r\Big)^{\frac
1{75}}.\ee Factor $ \chi=\chi_1\ldots \chi_r,$ where $\chi_i (\mod
q_i^{m_i})$ is arbitrary for $i<r$, and primitive for $i=r$.

 We further assume

{\rm(i)}. For all $ p | q_r, p > \sqrt{\log q_r}.$

{\rm(ii)}. For all $i$, $q_i^2< N<q $.

{\rm(iii)}. $r < 10^{-3} \log \log q_r$.

Then \be\label{26.2} \Big|\sum_{x\in I} \chi(x)\Big| <
N e^{-(\log q_r)^{4/5}},\ee
where $I$ is an interval
of size $N$.}

\medskip

\noindent{\it Case 2. $m>1$.}

In this situation, we follow the analysis in the proof of Lemma 6. (Particularly, see \eqref{4.6.4}-\eqref{4.6.6}.) To bound $\mathcal S$ in \eqref{26.15}, we will use Postnikov's theorem and Vinogradov's lemma ([Ga], Lemma 4) rather than Weil's estimate (Remark 3.3) as we did in Case 1.
Recall$$\mathcal S=\frac 1N\Big|\sum_{n=1}^N\chi_r(f(n))\Big|,$$
with $\chi_r$ primitive modulo $q_r$, and
 \be\label{1.12.2013}f(x)=\prod _{\alpha =1}^{\beta} (x-a_{\alpha})^{d_{\alpha}}\quad \text{with } d \leq 60^{r}m^{2}_{1} \dots m^{2}_{r},\ee
where $f(x)$ satisfies the property that for all $p|q_{0}$, $f(x)$ is $p$-good.

Write $n\in [1,N]$ as $n=x+tq_0$, with $ 1\leq x\leq q_0$ and $1\leq
t\leq \frac N{q_0}$. Then as in \eqref{4.6.4} and \eqref{4.6.5},
\be\label{4.15} \begin{aligned}N\cdot\mathcal S =\;&
\sum_{x=1}^{q_0}\sum^{N/q_{0}}_{t=1}
\chi_r\big(f(x)\big)e_{q^{m}_{0}}\Big (\sum^{m-1}_{j=1} Q_{j}(x)
q^{j}_{0} t^{j}\Big)\\\leq
&\sum_{x=1}^{q_0}\;\Big|\sum^{N/q_{0}}_{t=1} e_{q^{m}_{0}}\Big
(\sum^{m-1}_{j=1} Q_{j}(x) q^{j}_{0} t^{j}\Big)\Big|.
\end{aligned}\ee

 We want to find some information on the
coefficients $Q_j$ in \eqref {4.6.3}. We may assume in \eqref{1.12.2013} that $a_1 =0$ is a simple
zero or pole of $f$; replacing $f$ by $\frac 1f$ (which we may by
replacement of $\chi$ by $\bar{\chi}$), hence\be\label{4.6}
f(x) =xg(x)=x\prod_{a_{\alpha}\not=0}(x-a_{\alpha})^{d_{\alpha}} \qquad\mod p\ee
with $g(0)$ defined and non-vanishing $(\mod p)$.

From \eqref {4.6.5}, \eqref{4.6.6}, and \eqref{4.6}, we have \be\label{4.7} j! Q_j(x)
=\sum_s (-1)^{s-1} \frac 1{s(xg(x))^s} \frac {d^j}{dt^j}
\big[\big((x+t)g(x+t)-xg(x)\big)^s\big]\Big|_{t=0}. \ee Let $G(t)=(x+t)g(x+t)-xg(x)$. Since $G(t)$ divides $\frac {d^j}{dt^j}G(t)^s$ for $s>j$, and $G(0)=0$, in \eqref{4.7} only
the terms $s\leq j$ contribute and $Q_j$ has a pole at $0$ of order
$j$. Write \be\label {4.8}
 C\cdot Q_j(x) =\frac 1{x^j}+\frac {A_j(x)}{B_j(x)}
\ee with $A_j(x), B_j(x)\in\mathbb Z[x]$ and $B_j(x) =x^k\hat
B_j(x), k<j.$
Here \be\label{4.9} \hat
B_j(0)\not\equiv 0(\mod p), \ee since $B_j(x)$ is a product of monomials
of the form $x-a_\alpha$ and $a_\alpha\not\equiv 0 (\mod p)$ for $\alpha
\not= 1$. Thus \be\label{4.10} C\cdot Q_j(x) =\frac {P_j(x)}{x^j
\hat B_j(x)} \ee where $P_j(x) \in\mathbb Z[x]$ is of degree at most
$dj$, $P_j(0)\not\equiv 0 \; (\mod p)$. It follows that \be\label{4.11}
\big|\{1\leq x\leq p : Q_j(x) \equiv  0 \; (\mod p)\}\big|\leq  dj,
\ee
and
\be\label{4.12} \big|
\{1\leq x\leq q_0 : Q_{j}(x)\equiv 0 \; (\mod \bar q_0)\}\big| \leq
(dj)^{\omega(\bar q_0)} \frac {q_0}{\bar{q_0}}, \ee
whenever $\bar q_0 |q_0$.
Taking $j=m-1$ and fixing $x$, we will
apply Vinogradov's lemma (\cite{Ga}, Lemma 4) to bound the following inner double sum in \eqref{4.15}.
\be\label{1.16.2013} \Big|\sum^{N/q_{0}}_{t=1} e_{q^{m}_{0}}\Big
(\sum^{m-1}_{j=1} Q_{j}(x) q^{j}_{0} t^{j}\Big)\Big|. \ee

\noindent
{\bf Lemma} {\rm  (Vinogradov).}
{\it Let $f(t)=a_{1}t +\cdots + a_{k}t^{k} \in \mathbb{R}[t], k\geq2$ and $P\in\mathbb{Z}_{+}$ large.\\
Assume $a_{k}$ rational, $a_{k}=\frac{a}{b}, (a,b)=1$ such that

\be\label{4.13}
2< P\leq b\leq P^{k-1}
\ee
Then \be\label{4.14} \Big|\sum_{n\in I} e(f(n))\Big| < C^{k(\log
k)^{2}} P^{1-\frac{c}{k^{2}\log k}} \ee for any interval $I$ of
size $P$ $(c, C$ are constants). }

\medskip

Since the dominating saving in \eqref{4.14} is $P^{\frac{c}{k^{2}\log k}}$, we want the denominator of the leading coefficient $a_{m-1} =\frac {Q_{m-1}(x)}{q_0^m}q_0^{m-1}$ in \eqref{1.16.2013} big.

Let $\bar q_0=\big(Q_{m-1}(x), q_0\big)$. Write $Q_{m-1}(x) \equiv \bar q_0 \bar a\in\mathbb Z (\mod q_0)$.
 Therefore
$$a_{m-1} =\frac {Q_{m-1}(x)}{q_0}=\frac {\bar a}{\bar{\bar q}_0},\; \text{ with } \bar{\bar q}_0= \frac {q_0}{\bar q_0}\;\text{ and } (\bar a, \bar{\bar q}_0)=1.$$

To sum $x\in [1, q_0]$ in \eqref{4.15}, we distinguish the cases according to $\big(Q_{m-1}(x), q_0\big)$.

\noindent{\it Case} (i). $\bar q_0=\bar q_0(x)=(Q_{m-1}(x), q_0)\leq\sqrt{q_0}$. Hence $\bar{\bar q}_0 >\sqrt{q_0}$.

Divide the interval $[1, N/q_0]$ into subintervals of length $\sqrt {q_0}$ each. Applying \eqref{4.14} with $P=\sqrt{q_0}$ to the subsum over each subinterval and summing up the subsums give \be\label{4.16}\begin{aligned}
\Big|\sum^{N/q_{0}}_{t=1} e_{q^{m}_{0}}\Big (\sum^{m-1}_{j=1}
Q_{j}(x) q^{j}_{0} t^{j}\Big)\Big|&< \frac N{q_0} \ C^{m(\log m)^2} q_0^{-\frac
c{2m^2(\log m)}}
. \end{aligned}\ee
The last inequality follows from \eqref{15.8}-\eqref{15.6}.

\noindent{\it Case} (ii). $\bar q_0=\bar q_0(x)=(Q_{m-1}(x), q_0)>\sqrt{q_0}$. We will use the trivial bound $N/q_0$ on \eqref{1.16.2013}. It remains to estimate the number of
those $1\leq x\leq q_0$ such that $Q_{m-1}(x)\equiv 0$ in $\mathbb
Z/\bar q_0\mathbb Z$ for some $\bar q_0 >\sqrt{q_0}$. This number is
by \eqref{4.12} at most \be\label{6.6} \sum_{\substack{\bar
q_0|q_0\\ \bar q_0>\sqrt{q_0}}} (dm)^{\omega (\bar q_0)}
\frac{q_0}{\bar q_0} < 2^{\omega(q_0)} (dm)^{\omega(q_0)}\sqrt{q_0}<
(2dm)^{\frac {2\log q_0}{\log\log N}} \sqrt{q_0}, \ee since all prime
divisors of $q_{0}$ are at least $(\log N)^{\frac12}$. Note that the
degree $d$ of $f(x)$ is bounded by \eqref{26.17}. Applying
\eqref{15.6} and \eqref{15.17}, we have \be\label{26.7} dm<60^r
\big(\log q_r\big)^{\frac 2{75}} \big(\log
N\big)^{3c}<\big(\log N\big)^{15c}.\ee

In particular, \eqref{26.7} will ensure that \eqref{6.6} is bounded
by $ q_0^{3/4}$.

Applying Case (i) and Case (ii) to \eqref{4.15}, we have \be\label{6.8}
\begin{aligned}
&\sum_{x=1}^{q_0}\Big|\sum^{N/q_{0}}_{t=1} e_{q^{m}_{0}}\Big (\sum^{m-1}_{j=1}
Q_{j}(x) q^{j}_{0} t^{j}\Big)\Big|\\= &\sum_{\bar q_0(x)\leq\sqrt{q_0}}\Big|\sum^{N/q_{0}}_{t=1} e_{q^{m}_{0}} \Big(\sum^{m-1}_{j=1}
Q_{j}(x) q^{j}_{0} t^{j}\Big)\Big|+\sum_{\bar q_0(x)>\sqrt{q_0}}\Big|\sum^{N/q_{0}}_{t=1} e_{q^{m}_{0}} \Big(\sum^{m-1}_{j=1}
Q_{j}(x) q^{j}_{0} t^{j}\Big)\Big|\\
\leq&\;\;q_0\;\frac N{q_0} C^{m(\log m)^2} q_0^{-\frac
c{2m^2(\log m)}}+ q_0^{\frac 34}\;\frac N{q_0}\\<&\;\;N C^{m\log m)^2} q_0^{-\frac
c{2m^2(\log m)}}. \end{aligned}\ee

Therefore \be\label{7.21.1}\mathcal S<C^{m\log m)^2} q_0^{-\frac
c{2m^2(\log m)}}.\ee

Now we want to apply \eqref{7.21.1} to \eqref{26.20}. In \eqref{26.20}, using \eqref{15.18}, we have a bound $(\log q_r)^\epsilon$ on the exponent of $M$ in the second term in \eqref{26.20}. On the other hand, after applying  \eqref{7.21.1} in the second factor of the second term in \eqref{26.20}, and using \eqref{15.17}, \eqref{15.6} and \eqref{15.5}, we bound the exponent of $q_0$ by $-(\log q_r)^{-\epsilon}$ as well. Hence we have
\be\label{00}\frac 1N\Big|\sum^{N}_{x=1} \chi
(x)\Big| < \frac{r-1}M+e^{(\log q_r)^{9/10+\epsilon}}\;q_0^{-(\log q_r)^{-\epsilon}}\ee

By \eqref{15.8}-\eqref{15.6}, the factor $\;q_0^{-(\log q_r)^{-\epsilon}}$ in \eqref{00} is bounded by $\exp(-(\log q_r)^{1-\epsilon'}) $. Hence \eqref{00} is bounded by $e^{-(\log q_r)^{9/10}}$ after applying \eqref{15.18} and \eqref{26.22}.
This proves the theorem.$\quad\square$

\section{Applications.}


Repeating the argument in deducing Theorem 4 from Theorem 3' and Theorem 3'', we obtain the following mixed character sum estimate from the proof of Theorem 5.

\medskip

There is the following mixed character sum version of Theorem 5.

\medskip

\noindent {\bf Theorem 8.} {\it Under the assumptions of Theorem
\ref{Theorem8}, \be\label {7.1} \Big|\sum_{x\in I}\chi(x) e^{if(x)}\Big|< N
e^{-\sqrt{\log N}} \ee assuming $f(x)\in\mathbb R[x]$ of degree at most
$(\log N)^c$ for some $c>0$.}

\medskip

A more precise statement is again possible, but the above one is all we need for what follows.
To prove Theorem 8, simply go back to the opening argument in Theorem 4 which removes the factor
$e^{if(x)}$ at the cost of replacing $\chi(x)$ by $\chi(R(x))$ with $R(x)$ a certain rational function of $x$.
Note that this step is already part of the proof of Lemma 6. At this point, proceed further with \S6
and \S 7 as in proving Theorem 5.

\medskip

\noindent{\bf Corollary 9.} {\it Assume $N$ satisfies
$$q>N>\max_{p|q} p^{10^3}$$
and $q$ satisfies
\be\label{7.6} \log N
>(\log qT)^{1-c}+ C\log \Big( 2\frac {\log q}{\log q'}\Big)\;\frac
{\log q'}{\log\log q}\;. \ee
Then for $\chi$ primitive, we have
\be\label{7.2} \Big|\sum_{n\in I}\chi(n) n^{it}\Big| < N
e^{-\sqrt{\log N}}. \ee }

\medskip

From Corollary 9, one derives bounds on the Dirichlet L-function $L(s, \chi)$ and zero-free
regions the usual way. See for instance Lemmas 8-11 in $[\;I\;]$. This leads to the following theorem.

\medskip

\noindent{\bf Theorem 10.} {\it Let $\chi$ be a primitive multiplicative character with modulus $q$,
$\mathcal P=\max _{p|q}p,$ $ q'=\prod_{p|q}p,$ and $ K=\frac{\log q}{\log q'}.$
For $T>0$, let $$\theta=c\min\Big(\frac 1{\log
\mathcal P}, \frac {\log\log q'}{(\log q')\log 2K},\frac 1{(\log
qT)^{1-c'}}\Big).$$ Then the Dirichlet L-function
$L(s,\chi)=\sum_n\chi(n) n^{-s}, s=\rho+it$ has no zeros in the region
$\rho>1-\theta, \; |t|<T$, except for possible Siegel zeros.}

\medskip

It follows in particular that $\theta\log qT\to\infty$ if $\frac {\log \mathcal P}{\log q}\to 0$.

From Theorem 10, we have the following.

\medskip

\noindent
{\bf Corollary 11.}
{\sl Assume $q$ satisfies that $\log p=o(\log q)$ for any $p|q$. If $(a, q)=1$, then there is a prime $P\equiv a(\mod q)$ such that
$P< q^{\frac {12} 5+o(1)}$.}

\medskip

To deduce Corollary  11,
we follow the exposition of Linnik's theorem in [IK] (see p.440). Define $$\psi(x, q,a)=\frac 1{\phi(q)}\sum_{\chi \mod q} \bar\chi(a)\sum_{n\leq x}\Lambda(n)\chi(n),$$
where $\Lambda$ is the von Mangoldt function. Assuming $x>q^{\frac {12}5+\epsilon}$, we have
\be\label{2.8.3} \begin{aligned}&\psi(x, q,a)\\=\;&\frac x{\phi(q)}\left\{ 1-\sigma \frac{x^{\beta_1-1}}{\beta_1}+O\left(\frac {x^{-c\epsilon}}{\epsilon}\right)+O\left(\frac {x^{-c\eta}}{\epsilon}\right)
+O\left(\frac {\log q}q\right)\right\}\end{aligned}\ee
with
\be\label{2.8.4} \eta >c \min\left(\frac 1{\log \mathcal P}, \frac {\log\log q}{\log q}\right),\;\;\mathcal P=\max_{p|q} p.\ee
Here, on the right hand side of \eqref{2.8.3}, the second term accounts for a possible Siegel zero $s=\beta_1$ in  Theorem 10
(in which case $\sigma=1$, otherwise $\sigma=0$), while the third term is from Huxley's density estimate [H], and the fourth term from the zero-free region given by \eqref{2.8.4}. Certainly, $x^{-c\eta}\to 0$, if $\frac{\log\mathcal P}{\log q}\to 0$ (with $\epsilon$ fixed).
Also,
$$1-\frac {x^{\beta_1-1}}{\beta_1}\geq \beta_1-q^{-(1-\beta_1)}=1-\frac \gamma{\log q}-e^{-\gamma}$$
with $\gamma=(1-\beta_1)\log q$.

We distinguish two cases. If $\gamma$ is sufficiently small, then Corollary 2 in [HB2]  applies.
In this case there is a prime $ P\equiv a\pmod q$ with $P<q^{2+\delta}<q^{\frac {12}5}$.
 Otherwise, the right hand side of  \eqref{2.8.3} is greater than zero. Hence the conclusion in Corollary 11 holds in either case.

Next, following Goldmakher's work [G], we will derive from Theorem 5 the following theorem.

\medskip

\noindent{\bf Theorem 12.} {\it Let $\chi$ be a primitive multiplicative character with modulus $q$,
$\mathcal P=\max_{p|q} p$, $  q'=\prod_{p|q}p$ and
$K=\frac{\log q}{\log q'}$.\\  Let $M=(\log q)^{1-c}+\frac {\log q'}{\log\log q'}\log 2K+\log \mathcal P. $ Then
$$\Big|\sum_{n<x}\chi(n)\Big|\ll\sqrt q \sqrt{\log q}\;\sqrt M\sqrt{\log\log\log q}.$$}

We will sketch the argument and refer to [G] for more details.
  Denoting
$$\mathcal S_{\chi}(x)=\sum_{n<x}\chi(n)$$
with $\chi$ primitive modulo $q$. Proposition  2.2 in [G] states that
\be\label{2.8.5}
|\mathcal S_{\chi}(x)|\ll \sqrt q\;(\log q)^{\frac 12}\;\left|L(1+\frac 1{\log q}, \chi \bar\xi)\right|^{\frac 12}+\sqrt q\;(\log q)^{\frac 67}\ee
for some primitive character $\xi\pmod m$ of conductor less than $(\log q)^{1/3}$. This result uses crucially the work of Granville and Soundararajan [GS]. Let $\psi\pmod Q$ be the primitive character which induces $\chi\xi$. Then, by [G], Lemma 5.1 and Lemma 5.2,
\be\label{2.8.6} \frac qm\leq Q \leq qm\ee
and
\be\label{2.8.7}\left|\frac{L(s, \chi\bar\xi)}{L(s, \psi)}\right|\ll 1+\log\log m.\ee
Taking $s=1+\frac 1{\log q}$, we get by partial summation that
\be\label{2.8.8}L(s, \psi)=s\int_1^\infty \frac 1{t^{s+1}}\left(\sum_{n\leq t}\psi(n)\right) dt.\ee
For $t>Q$, estimate $|\sum_{n\leq t}\psi(n)|$ trivially by $Q$, which contributes in \eqref{2.8.8} for $O(1)$.
Next,
\be\label{2.8.9}\int_1^Q \frac 1{t^{s+1}}\left|\sum_{n\leq t}\psi(n)\right| dt \ll \log T+
\int_T^Q \frac 1{t^{2}}\left|\sum_{n\leq t}\psi(n)\right| dt\ee
and we apply Theorem 5 to bound $\sum_{n\leq t}\psi(n)$ in the second term of the
right hand side of \eqref{2.8.9}. Note that by \eqref{2.8.6} the expression \eqref{2.8.11} in Theorem 5
is essentially preserved, if $Q$ is replaced by $q$. Thus $T=N$ has to be chosen to satisfy \eqref{2.8.11} and Theorem 12 follows from \eqref{2.8.5}, \eqref{2.8.7}, and \eqref{2.8.9}.


\bigskip

\noindent{\it Acknowledgement.}  The author would like to thank the referee for very careful reading of the paper, which greatly improved its presentation. The author would also like to thank I. Shparlinski for helpful comments and the mathematics department of University of California at Berkeley for hospitality. Part of the work was finished when the author was in residence at the Mathematical Science Research Institute and supported by the NSF Grant~0932078000.

\end{document}